\newtheorem{theorem}{Theorem}[section]
\newtheorem{proposition}[theorem]{Proposition} 
\newtheorem{lemma}[theorem]{Lemma}
\newtheorem{definition}[theorem]{Definition} 
\newtheorem{remark}[theorem]{Remark}  
\numberwithin{equation}{section} 
\def \RR   {\mathbb{R}}
\def \RN   {\RR^N}  
\def \del  {\partial}    
\def \Ucal  {\mathcal U}
\def \Vcal  {\mathcal V}
\def \Ecal  {\mathcal E}
\def \lam  {\lambda} 
\def \phif   {\varphi^\flat} 
\def \eps  {\epsilon}
\def \lamb {\overline{\lambda}} 
\def \be   {\begin{equation}}
\def \ee   {\end{equation}}
\def \bea  {\begin{eqnarray}} 
\def \eea  {\end{eqnarray}}
\def \bz {\begin{itemize}}
\def \ez {\end{itemize}}
\def\vf    {{\varphi^\flat}}  
\def\vfalpha  {{\varphi^\flat}}   
\def\vfz   {{\varphi^\flat_0}}  
\def\vn    {{\varphi^\natural}}
\def\vs    {{\varphi^\sharp}}
\def\ab    {\overline a}   
\newcommand \vfpalpha {{\varphi^\flat}}   
\newcommand \anp  {{A^\natural_p}}  
\newcommand{\oalpha}{{{\alpha}}}
\newcommand{\oalphaz}{{{\alpha}}_0} 
\def \Lip   {\text{\rm Lip}}  
\begin{document}

\title{Kinetic relations
\\ 
for undercompressive shock waves.
\\ 
Physical, mathematical, and numerical issues} 
\author{ 
Philippe G. LeFloch 
\footnote{Laboratoire Jacques-Louis Lions \& Centre National de la Recherche Scientifique, 
Universit\'e Pierre et Marie Curie (Paris 6), 4 Place Jussieu, 75252 Paris, France. E-mail : {\sl pgLeFloch@gmail.com.} 
Blog: {\sl http://PhilippeLeFloch.wordpress.com} 
\newline 
2000\textit{\ AMS Subject Classification.} 35l65, 76N10, 35B45. 
\newline
\textit{Key Words and Phrases.} Hyperbolic conservation law, shock wave, 
diffusion, dispersion,  traveling wave, kinetic relation, undercompressive shock, phase transition, 
Glimm scheme, 
entropy conservative scheme, equivalent equation. 
}
}
\date{\today}
\maketitle

\begin{abstract} Kinetic relations are required in order to characterize nonclassical undercompressive shock waves 
and formulate a well-posed initial value problem for nonlinear hyperbolic systems of conservation laws. 
Such nonclassical waves arise in weak solutions of a large variety of physical models: 
phase transitions,  thin liquid films, magnetohydrodynamics, Camassa-Holm model,
martensite-auste\-nite materials,  
semi-conductors, combustion theory, etc. This review presents the research done in the last fifteen years which led
the development of the theory of kinetic relations for undercompressive shocks
and has now covered many 
physical, mathematical, and numerical issues. The main difficulty overcome here in our analysis 
of nonclassical entropy solutions comes from their {\sl lack of monotonicity} with respect to initial data. 

First, a nonclassical Riemann solver is determined by imposing a single entropy inequality,
a kinetic relation and, if necessary, a nucleation criterion. 
To determine the kinetic function, the hyperbolic system of equations is augmented with 
diffusion and dispersion terms, accounting for small-scale physical effects such as the viscosity, capillarity, 
or heat conduction of the material under consideration. 
Investigating the existence and properties of traveling wave solutions allows one to establish the existence, 
as well as qualitative properties, of the kinetic function. 
To tackle the initial value problem, 
a Glimm-type scheme based on the nonclassical Riemann solver is introduced, together with 
generalized total variation and interaction functionals which are adapted to nonclassical shocks. 
Next, the strong convergence of the vanishing diffusion-dispersion approximations for the initial value probem 
is established via weak convergence techniques. 
Finally, the numerical approximation of nonclassical shocks relies 
on schemes with controled dissipation, built
from high-order, entropy conservative, finite difference approximations  
and an analysis of their equivalent equations. Undercompressive shocks of hyperbolic conservation laws 
turn out to exhibit features that are very similar to shocks of nonconservative hyperbolic systems, 
who were investigated earlier by the author. 
\end{abstract}

\tableofcontents
  

\section{Introduction}
\label{IN}

\subsection{Augmented systems of conservation laws} 

Nonlinear hyperbolic systems of conservation laws have the general form  
\be 
u_t + f(u)_x = 0, \qquad u=u(t,x), 
\label{IN.system}
\ee 
with unknown $u: \RR^+ \times \RR \to \Ucal$, where
the given flux $f: \Ucal \to \RN$ is defined on a (possibly non-connected) open set $\Ucal\subset \RN$ 
and satisfies the following strict hyperbolicity condition: 
for every $v \in \Ucal$, the matrix $A(v):=Df(v)$ admits $N$ real and distinct eigenvalues 
$$
\lam_1(v) < \ldots < \lam_N(v)
$$
and a basis of right-eigenvectors $r_1(v), \ldots, r_N(v)$.  
It is well-known that singularities arise in finite time in initially smooth solutions to \eqref{IN.system},
which motivates one to seek for weak solutions understood in the sense of distributions, containing
for instance shock waves.  
It turns out that weak solutions are not uniquely determined by their initial data at $t=0$, say, 
and consequently, 
it is necessary to impose an ``entropy condition'' in order to formulate a well-posed initial value problem. 
 
In the present work, the weak solutions of interest are realizable as limits ($\eps \to 0$) of 
smooth solutions ${u^\eps = u^\eps(t,x)}$ to an {\sl augmented model} of the form 
\be 
u^\eps_t + f(u^\eps)_x = R^\eps_x := \big( R(\eps \, u^\eps_x, \eps^2 \, u^\eps_{xx}, \ldots) \big)_x. 
\label{IN.augmented}
\ee
The parameter $\eps>0$ introduces a small-scale in the problem, and the term $R_x^\eps$ 
accounts for high-order physical features, 
neglected at the hyperbolic level of modeling \eqref{IN.system}. 
In the examples arising in fluid dynamics and material science, 
suitable restrictions arise on the right-hand side $R^\eps_x$ so that, for instance,   
the conservation laws \eqref{IN.system} are recovered in the limit $\eps \to 0$.

Fine properties of the physical medium under consideration are taken into account in \eqref{IN.augmented}. 
For instance, \eqref{IN.system} might be the Euler equations for liquid-vapor mixtures, 
while its augmented version \eqref{IN.augmented} is
a Korteweg-type model including both viscosity and capillarity effects. 
In many examples of interest, the augmented system admits global-in-time smooth solutions 
that are uniquely determined by their initial data. In this context, 
it is natural to select those solutions of \eqref{IN.system} that are realizable as (possibly only formal) limits 
of solutions to \eqref{IN.augmented}, that is, to define 
\be
u := \lim_{\eps \to 0} u^\eps. 
\label{IN.formal}
\ee  
We face here the fundamental issue raised in the present review: 
How may one characterize this limit $u$~?
In particular, which propagating discontinuities should be considered admissible~? 
Our primary focus is thus on the derivation of the ``sharp interface'' theory associated with 
\eqref{IN.augmented} consisting of 
finding algebraic conditions for shock waves which, 
within the class of weak solutions with bounded variation, 
single out a unique solution to the initial value problem 
associated with \eqref{IN.system} (or, at least, a unique solution to the Riemann problem).

A large part of the mathematical literature on shock waves is focused on 
small amplitude solutions generated by the vanishing viscosity method, 
corresponding in \eqref{IN.augmented} to the regularization 
\be
\label{visco}
R^\eps_x :=  \eps \, u^\eps_{xx}. 
\ee
With this regularization, the limits $u := \lim_{\eps \to 0} u^\eps$ are characterized uniquely 
\cite{Volpert,Kruzkov,BressanLeFloch, BLP-unique,BianchiniBressan}   
by one of the equivalent formulations of the ``classical'' entropy condition introduced 
by Lax \cite{Lax}, Oleinik \cite{Oleinik}, Kruzkov \cite{Kruzkov}, 
Wendroff \cite{Wendroff}, Dafermos \cite{Dafermos-entropy, Dafermos-book}, 
and Liu \cite{Liu1,Liu2}. 
Following \cite{LeFloch-book}, we refer to these weak solutions as {\sl classical entropy solutions.}


\subsection{Diffusive-dispersive approximations}

Our objective is to develop a theory of weak solutions that encompasses regularizations 
beyond \eqref{visco}, including {\sl diffusive-dispersive regularizations} of the form  
\be
R^\eps_x = \eps \, u^\eps_{xx} + \alpha \, \eps^2 \, u^\eps_{xxx}. 
\label{IN.diffusivedispersive}
\ee
Here, $\alpha$ is a fixed real parameter and $\eps \to 0$, 
so that the two regularization terms are {\sl kept in balance}  
and a subtle competition takes place between two distinct phenomena: 
the diffusion term $\eps \, u^\eps_{xx}$ has a regularizing effect on shock waves 
while the dispersion term $\alpha \, \eps^2 \, u^\eps_{xxx}$ generates oscillations with high frequencies. 
However, as is for instance observed in numerical experiments, 
such oscillations arise near the jump discontinuities, only, and form spikes 
with finite amplitude  
superimposed on shocks. In turn, the formal limit \eqref{IN.formal} does exist
 but do not coincide with the one selected by the viscosity approximation \eqref{visco}. 
 In fact, these limiting solutions {\sl depend} on the value of the parameter $\alpha$, and    
the same initial data with {\sl different} values of $\alpha$
give rise to {\sl different} shock wave solutions.   

The above observation motivates us to develop, for the system of conservation laws  \eqref{IN.system},
{\sl several theories of shock waves,} each being associated with a different formulation of the 
entropy condition. In the present work, we are primarily interested in hyperbolic 
systems  
for which the characteristic speed $\lambda_j = \lambda_j(u)$ along each integral curve of the vector field
$r_i=r_j(u)$ admits {\sl one extremum point,} at most. 
For such systems and the class regularization \eqref{IN.diffusivedispersive},
a theory of entropy solution is now available which is based on 
the concept of a {\sl kinetic relation.} 
The basic strategy is to impose a single entropy inequality in the sense of distributions (cf.~\eqref{GT.entropy}, below), together with an additional algebraic condition imposed on certain {\sl nonclassical undercompressive} shock waves.

The kinetic relation was first introduced and developed in the context of 
the {\sl dynamics of materials undergoing phase transitions,}  
described by the system of two conservation laws 
\be
\label{555}
\aligned 
 w_t - v_x &  = 0,
\\
v_t - \sigma(w)_x 
&  = \eps \, v_{xx} - \alpha \, \eps^2 \, w_{xxx},  
\endaligned    
\ee
where $v$ denotes the velocity, $w>-1$ the deformation gradient, and $\sigma=\sigma(w)$ 
the stress of the material. The parameters $\eps$ and $\alpha \, \eps^2$ 
represent the (rescaled) viscosity and capillarity of the material. 
The mathematical research on \eqref{555} began with works by Slemrod in 1984 on self-similar solutions 
to the Riemann problem \cite{Slemrod1,Slemrod2} (i.e.~the initial value problem with piecewise constant data), 
and Shearer in 1986 on the explicit construction of a Riemann solver when $\alpha = 0$ \cite{Shearer-86}.
The notion of a kinetic relation for {\sl subsonic phase boundaries} 
was introduced by Truskinovsky in 1987 \cite{Truskinovsky1,Truskinovsky2,Truskinovsky3}, 
and Abeyaratne and Knowles in 1990 \cite{AbeyaratneKnowles1,AbeyaratneKnowles2}, who 
solved the Riemann problem for \eqref{555} and investigated the existence of traveling wave solutions
when $\sigma$ is a piecewise linear function. 
In 1993, LeFloch \cite{LeFloch-ARMA}
introduced a mathematical formulation of the kinetic relation for \eqref{555}
within the setting of functions of bounded variation and tackled the initial value problem via the Glimm scheme;
therein, the kinetic relation was interpreted as an {\sl entropy dissipation measure} 
(cf.~Section~\ref{GT}, below). 
The kinetic relation was then extended to general hyperbolic systems by the author 
together with many collaborators,    
and developed into a general {\sl mathematical theory of nonclassical shocks,}
which we review 
in the present notes. For earlier reviews, see \cite{LeFloch-Freiburg,LeFloch-book}.


\subsection{Main objectives of the theory}

Weak solutions in the sense of distributions can be defined within, for instance, 
the space  $L^\infty$ of bounded and measurable functions.
However, to deal with nonclassical solutions one needs to prescribe (pointwise) algebraic conditions 
at jumps and, consequently, one must work within a space of functions admitting traces such as  
the space of functions with bounded variation. In turn, 
the theory of nonclassical shocks relies on a {\sl pointwise formulation} of the notion of entropy solution
and on techniques of pointwise convergence. The main difficulty overcome here in our analysis 
of nonclassical entropy solutions comes from their {\sl lack of monotonicity} with respect to initial data. 
 The following issues have been addressed.

\bz

\item[1.] {\sl Derivation and analysis of physical models.}

Such nonclassical undercompressive waves arise in weak solutions to a large variety of physical models
from fluid or solid dynamics: 
phase transitions,  thin liquid films, magnetohydrodynamics, 
Camassa-Holm model,
martensite-auste\-nite materials, 
semi-conductors, combustion theory, etc.

\item[2.] {\sl Mathematical theory of nonclassical entropy solutions.}
 
\begin{itemize}

\item By considering initial data consisting of two constant states separated by a single discontinuity, 
one constructs a {\sl nonclassical Riemann solver} associated with a prescribed {\sl kinetic function,}  
compatible with an {\sl entropy inequality.}   

\item Kinetic functions can be determined from traveling wave solutions to an augmented model like 
\eqref{IN.augmented}-\eqref{IN.diffusivedispersive} and, for certain models,  
monotonicity and asymptotic properties of the kinetic functions can be established.

\item Establishing via Glimm-type schemes
the existence of nonclassical entropy 
solutions to the initial value problem associated with \eqref{IN.system}  
requires a uniform estimate on the total variation of solutions. 
Such a bound is derived with the help of generalized variation functionals that are designed to be 
diminishing in time. 

\item The zero diffusion-dispersion limits for the initial value problem is justified rigorously by 
using weak convergence techniques:
compensated compactness, measure-valued solutions, kinetic formulation. 
 
\ez

\item[3.] {\sl Approximation of nonclassical entropy solutions.}  

\bz

\item Finite difference {\sl schemes with controled dissipation} are constructed from
entropy conservative flux-functions and a careful analysis of their equivalent equations. 

\item Kinetic functions are associated with finite difference schemes and computed numerically.   
Limiting solutions depend on the regularization terms arising 
in the augmented model and, for a given initial value problem, different solutions 
are obtained if different viscosity-capillarity ratio or different discretization schemes are used.  

\ez

\item[4.] {\sl Applications and limitations of the theory.}  

\bz

\item Nonclassical solutions exhibit particularly complex wave structures, 
and 
the kinetic function appears to be the proper tool to represent the entire dynamics of nonclassical shocks.

\item Of course, describing all singular limits of a given augmented model  
via a ``purely'' hyperbolic theory need not always be possible. For certain regularizations for which the 
traveling wave analysis does not lead to a unique Riemann solver, 
a {\sl nucleation criterion} may be required in order to uniquely characterize nonclassical entropy solutions.  

\ez

\ez

One can not underestimate the fruitful connection
between the theory of kinetic relations for undercompressive shocks and 
the so-called DLM (Dal~Maso, LeFloch, Murat) theory of nonconservative hyperbolic systems. 
This theory was introduced and developed in 
\cite{LeFloch-CPDE,LeFloch-IMA,DLM,LeFlochLiu}. 
Fundamental numerical issues were first discussed in \cite{HouLeFloch}; 
for recent progress, see \cite{BerthonCoquel,BerthonCoquelLeFloch,Castro--Pares,ChalonsCoquel} and the references therein.

Building on the pioneering papers \cite{AbeyaratneKnowles1,LeFloch-ARMA,Truskinovsky1}, 
the research on undercompressive shocks developed intensively in the last fifteen years, 
and the author of this review 
is very grateful to his collaborators, postdocs, and students who accompanied him on this subject, 
including B.T. Hayes (basic theoretical and numerical issues),  
N. Bedjaoui (traveling wave solutions), K.T. Joseph (self-similar approximations), 
M.-D. Thanh (Riemann problem), M. Shearer (nucleation criterion), 
D. Amadori, P. Baiti, M. Laforest, B. Piccoli (Glimm-type methods), 
J.C. Correia, C. Kondo (vanishing diffusive-dispersive limits), 
F. Boutin, C. Chalons, F. Lagouti\`ere, J.-M. Mercier, {S. Mishra}, M. Mohamadian, and 
C. Rohde (numerical methods).


\section{Kinetic relations for undercompressive shocks}
\label{GT} 

\subsection{A single entropy inequality} 
\label{GT-EI} 

We follow the strategy advocated in LeFloch \cite{LeFloch-ARMA,LeFloch-book} for the analysis of
the formal limits \eqref{IN.formal} associated with a given augmented model.
Recall that we are interested in deriving an entropy condition that singles out all 
solutions to \eqref{IN.system} realizable as limits of smooth solutions to \eqref{IN.augmented}. 

In the applications, \eqref{IN.augmented} is ``compatible''  (in a sense defined below) 
with one particular mathematical entropy of the hyperbolic system \eqref{IN.system}.
Hence, we assume that \eqref{IN.system} is endowed with a {\sl strictly convex entropy pair,}
denoted by $(U,F)$. (Strict convexity is imposed throughout this presentation, but 
can be relaxed on certain examples.) 
By definition, $(U,F) : \Ucal \to \RR \times \RN$ is a smooth map such that 
$$
\del_j F = \del_k U \, \del_j f^k, \qquad j=1, \ldots, N, 
$$
where $\del_j$ denotes a partial derivative with respect to the conservative variable $u_j$, 
with $u=(u_1, \ldots, u_N)$ and $f=(f^1, \ldots, f^N)$, and 
we use implicit summation over repeated indices. 

It is easily checked from the definition of an entropy 
that every {\sl sufficiently regular} solution to \eqref{IN.system} satisfies the additional conservation law
\be
U(u)_t + F(u)_x = 0. 
\label{GT.equality}
\ee

\begin{definition}  
\label{fff}
An augmented version \eqref{IN.augmented} of a system of conservation laws \eqref{IN.system} 
endowed with a strictly convex entropy pair $(U,F)$, 
is said to be {\rm conservative} 
and {\rm dissipative with respect to the entropy $U$} 
if 
for every non-negative, compactly supported, smooth function $\theta=\theta(t,x)$ 
\be
\lim_{\eps \to 0} \iint_{\RR^+\times\RR} R^\eps \, \theta  
\, dtdx =0,  
\label{GT.conservative}
\ee
and 
\be 
\lim_{\eps \to 0} \iint_{\RR^+\times\RR} \nabla U(u^\eps) \cdot R^\eps \, \theta  
\, dtdx  \leq 0,   
\label{GT.dissipative}
\ee 
respectively. 
\end{definition} 

The conditions in Definition~\ref{fff} are fulfilled by many examples
of interest and, when the singular limit \eqref{IN.formal} exists in a strong topology, 
they lead to the conservation laws \eqref{IN.system} and 
an entropy inequality.

\begin{definition} 
\label{GT-entropysolution}
Suppose that \eqref{IN.system} is a system of conservation laws endowed with a 
strictly convex entropy pair $(U,F)$. A bounded and measurable function with locally bounded variation 
$u=u(t,x) \in \Ucal$ is called an {\rm entropy solution} if the equations \eqref{IN.system}  
together with the entropy inequality 
\be
U(u)_t + F(u)_x  \leq 0, 
\label{GT.entropy}
\ee
hold in the sense of distributions.  
\end{definition}

This should be regarded as a preliminary definition of solution since, in general, 
\eqref{GT.entropy} is {\sl not} sufficient to guarantee uniqueness for the initial value problem, 
and we are going to impose additional constraints.

The case of scalar, {\sl convex} flux-functions is comparatively simpler: {\sl one}
 entropy inequality turns out to be 
sufficient to characterize a unique limit (see Panov \cite{Panov-unique}
and De~Lellis, Otto, and Westdickenberg \cite{DOW}) 
and, consequently, shock waves satisfying a single entropy inequality 
are regularization-independent. 
The corresponding solutions are referred to as {\sl classical entropy solutions} and contain
compressive shocks, only, which satisfy Lax's shock inequalities. 

For strictly hyperbolic systems admitting characteristic fields  that are 
either {\sl genuinely nonlinear} ($\nabla \lam_j \cdot r_j \neq 0$) 
or {\sl linearly degenerate}  ($\nabla \lam_j \cdot r_j \equiv 0$),
 the inequality \eqref {GT.entropy} for a given strictly convex entropy
 is sufficiently discriminating to select a unique entropy solution. 
This fact was established first for the Riemann problem in Lax's pioneering paper \cite{Lax57}.  
For the Cauchy problem associated with systems with genuinely nonlinear or linearly degenerate 
characteristic fields, a general uniqueness theorem 
in a class of functions with bounded variation 
was established by Bressan and LeFloch \cite{BressanLeFloch}. 
Hence, weak solutions to genuinely nonlinear or linearly degenerate systems are {\sl independent} 
of the precise regularization mechanism $R^\eps$ in the right-hand side of \eqref{IN.augmented}, 
as long as it is conservative and dissipative in the sense of Definition~\ref{fff}.


\subsection{Kinetic relation. Formulation based on state values} 
\label{GT-RP} 

In the applications, many models arising in continuum physics (see Section~\ref{MOD}, below)
do not have globally genuinely nonlinear characteristic fields. 
In numerical experiments with such systyems, weak solutions often exhibit particularly complex wave patterns, 
including {\sl undercompressive shocks.}  
Distinct solutions are obtained for the same initial value problem 
if one changes the diffusion-dispersion ratio, the regularization, or the approximation scheme.  
From the analysis standpoint, it turns out that
for such systems one entropy inequality \eqref {GT.entropy} is not 
sufficiently discriminating: the initial value problem admits 
a large class of entropy solutions (satisfying a single entropy inequality). 
Weak solutions are highly {\sl sensitive} to small-scales  
neglected at the hyperbolic level of physical modeling and one needs to determine further admissibility conditions beyond \eqref{GT.entropy}. No {\sl universal} admissibility criterion is available in this context but, instead, 
{\sl several hyperbolic theories} must be developed, each being determined by  
specifying a physical regularization.  The approach advocated by the author is based on 
imposing a {\sl kinetic relation} in order to uniquely characterize the dynamics of nonclassical undercompressive shocks.
   
Recall that a {\sl shock wave} $(u_-,u_+)$ is a step function 
connecting two constant states $u_-, u_+$ through a single jump discontinuity,
propagating
at some finite speed $\lamb=\lamb(u_-,u_+)$. To be an entropy solution in the sense of Definition~\ref{GT-entropysolution}, 
$(u_-, u_+)$ must satisfy the {\sl Rankine-Hugoniot relation} 
\be
- \lamb \, (u_+ - u_-) + f(u_+) - f(u_-) = 0, 
\label{GT.Hugoniot}
\ee 
as well as the {\sl entropy inequality}  
\be 
- \lamb \, \bigl(U(u_+) - U(u_-) \bigr) + F(u_+) - F(u_-) \leq 0. 
\label{GT.entropyineq} 
\ee
Denote by $\Ecal^U$ the set of all pairs $(u_-, u_+) \in \Ucal \times \Ucal$ 
satisfying \eqref{GT.Hugoniot}-\eqref{GT.entropyineq} for some speed $\lamb=\lamb(u_-, u_+)$.

The emphasis in this section is on solutions to strictly hyperbolic systems with 
small amplitude (but the presentation extends to large data for particular examples). 
Then, one can show that, for each $u_-$,
\eqref{GT.Hugoniot} consists of $N$ curves issuing from $u_-$ and tangent to each characteristic vector $r_j(u_-)$. 
Wen the shock speed
$\lamb=\lamb(u_-, u_+)$ is comparable to the characteristic speeds $\lamb_j(u_\pm)$, 
we write also $\lamb=\lamb_j(u_-, u_+)$ and accordingly we decompose the shock set
$$
\Ecal^U = \Ecal_1^U \cup \ldots \cup \Ecal_N^U. 
$$
Furthermore, we tacitly assume that the set of definition $\Ucal$ is replaced by a smaller open subset, if necessary.  
Finally, we assume that  the characteristic speed $\lambda_j = \lambda_j(u)$ along each integral curve of the vector field
$r_i=r_j(u)$, or along each Hugoniot curve, admits {\sl one extremum point,} at most. We refer to characteristic fields
having such an extremum as {\sl concave-convex characteristic fields.}

A $j$-shock $(u_-, u_+)$ is called {\sl slow undercompressive} if 
$$
\lambda_j(u_\pm) > \lamb_j(u_-, u_+)
$$
and {\sl fast undercompressive} if the opposite inequalities hold. 
Across undercompressive shocks, one must supplement 
the Rankine-Hugoniot relation and entropy inequality
 with an additional jump condition, as follows. 
Recall that functions of bounded variation (BV) admit traces in a measure-theoretic sense.   
While \eqref{GT.equality} and \eqref{GT.entropy} can be imposed in the sense of distributions, 
this regularity of BV functions is here
required for pointwise conditions to make sense.

\begin{definition}[Formulation based on state values]
Consider small amplitude solutions to a strictly hyperbolic
system of conservation laws \eqref{IN.system} 
endowed with a strictly convex entropy pair $(U,F)$ and admitting genuinely nonlinear, linearly degenerate, or concave-convex characteristic fields, only.  

1. A {\rm kinetic function compatible with the entropy $U$} for a concave-convex $j$-characteristic family 
is a Lipschitz continuous map $\phif_j=\phif_j(u)$ defined over $\Ucal$ such that 
\be
(u, \phif_j(u)) \in \Ecal_j^U \, \text{ is an undercompressive shock.} 
\label{GT.mapPhi}
\ee

2. A family of kinetic functions  $\phif := (\phif_j)$ being prescribed for each concave-convex characteristic field, 
one says that 
a bounded and measurable function with locally bounded variation $u=u(t,x) \in \Ucal$ 
is a {\rm $\phif$-admissible entropy solution} if it is an entropy solution to 
\eqref{IN.system}-\eqref{GT.entropy} and,
 at every point of approximate jump discontinuity $(t,x)$ of $u$ associated with an undercompressive
 $j$-shock, 
\be
u_+ = \phif_j(u_-), 
\label{GT.kineticrelation299}
\ee 
where $u_-, u_+$ are the left- and right-hand limits at that point. 
\end{definition}

We will refer to \eqref{GT.kineticrelation299} as the {\sl kinetic relation} associated with $\phif_j$, 
and 
to $\phif$-admissible entropy solutions as {\sl nonclassical entropy solutions.} 
Note that not all propagating waves within a solution 
require a kinetic relation, but only undercompressive shocks do.
The kinetic relation is very effective and, for many (but not all) models of interest, 
selects a unique $\phif$-admissible entropy solution to the Riemann problem.


\subsection{Kinetic function. Formulation based on the entropy dissipation} 
\label{GT-KF} 

To have an effective theory, the kinetic function must be specified. One may simply postulate
the existence of the kinetic function $\phif$ without refereeing to a small-scale modeling, but instead 
defining it via laboratory experiments or some physical heuristics. 
A sounder approach relies on a prescribed augmented model, as we now explain. 

We introduce the  kinetic function in the following way  
based on the entropy dissipation measure generated by a given augmented model. 
Suppose that the product $\nabla U(u^\eps) \cdot R^\eps$ arising in the right-hand side of 
\eqref{IN.augmented} can be decomposed in the form 
$$
\nabla U(u^\eps) \cdot R^\eps = Q^\eps - \mu^\eps,
$$ 
where $Q^\eps$ converges to zero in the sense of distributions and 
$\mu^\eps$ is a uniformly bounded sequence of non-negative $L^1$ functions.
Let us refer to $\mu^\eps$ as the {\sl entropy dissipation measure.} 
Note that it depends on the specific regularization $R^\eps$ and the entropy $U$. 
(This decomposition can be established for several examples; see Section~\ref{MOD}.)
After extracting a subsequence if necessary, this sequence of measures converges  
in the weak-star sense 
$$ 
\mu^U (u) := \lim_{\eps \to 0} \mu^\eps \leq 0,
$$ 
where the limiting measure $\mu^U (u)$ is clearly related to the pointwise limit 
$u:=\lim_{\eps \to 0} u^\eps$, but in general {\sl cannot} be uniquely determined from 
the sole knowledge of this limit. 
 
For regularization-independent shock waves the sole {\it sign\/} 
of the entropy dissipation measure $\mu^U (u)$ suffices and one simply writes down the 
entropy inequality \eqref{GT.entropy}. However, for regularization-sensitive shock waves, 
the {\it range} of the measure $\mu^U (u)$ plays a crucial role in selecting  
weak solutions. 
It is proposed in LeFloch~\cite{LeFloch-ARMA,LeFloch-Freiburg,LeFloch-book} to replace the entropy inequality \eqref{GT.entropy}
by the {\sl entropy equality} 
\be
\label{pf}
U(u)_t + F(u)_x  = \mu^U(u) \leq 0, 
\ee
where $\mu^U(u)$ is a non-positive, locally bounded measure depending on the solution 
$u$ under consideration. 
Clearly, the measure $\mu^U(u)$ cannot be prescribed arbitrarily and, in particular,  
must vanish on the set of continuity points of $u$. 
  
Recalling that we focus the presentation on strictly hyperbolic systems and small data,
let $\Lambda_j$ be the range of the speed function $\lambda_j$, and let us use the short-hand
notation $\Ecal^U \times \Lambda$ for the union of all subsets $\Ecal^U_j \times \Lambda_j$.

\begin{definition}[Formulation based on the entropy dissipation measure]
  Suppose that \eqref{IN.system} is a system of conservation laws endowed with an entropy pair $(U,F)$. 

1. A {\rm kinetic function (compatible with the entropy $U$)} 
is a Lipschitz continuous map $\Phi=\Phi(u_-, u_+, \lam)$ defined on $\Ecal^U \times \Lambda$ 
and satisfying
\be
\Phi \leq 0. 
\label{GT.mapPhi2}
\ee

2. Let $\Phi$ be a kinetic function. A bounded and measurable function with locally bounded variation 
$u=u(t,x) \in \Ucal$ is called a {\rm $\Phi$-admissible entropy solution} if it is an entropy solution of 
\eqref{IN.system}-\eqref{GT.entropy} and if at every point of approximate jump discontinuity $(t,x)$ of $u$ 
\be
- \lam \, \bigl(U(u_+) - U(u_-) \bigr) + F(u_+) - F(u_-) = \Phi(u_-, u_+, \lam),   
\label{GT.kineticrelation1}
\ee
where $u_-, u_+,\lam$ denote the left- and right-hand limits and $\lam$ the shock speed. 
\end{definition}

The entropy dissipation \eqref{GT.kineticrelation1} is analogous to what is called 
a ``driving force'' in the physical literature. 
Note that at a point $(u_-,u_+)$ where the following trivial choice is made 
\be 
\Phi(u_-, u_+, \lam) := - \lam \, \bigl(U(u_+) - U(u_-) \bigr) + F(u_+) - F(u_-),   
\label{trivialchoice} 
\ee
the condition \eqref{GT.kineticrelation1} is vacuous. In practice, 
$\Phi(u_-, u_+, \lam)$ will be strictly monotone in $\lambda$ for all undercompressive shocks,
and \eqref{GT.kineticrelation1} may be seen to select a unique propagation speed $\lambda$
for each $u_-$. (See \cite{LeFloch-book} for details.)

To determine the kinetic function,  one then analyzes traveling wave solutions associated with \eqref{IN.augmented}. 
Given any  shock wave $(u_-, u_+)$, a function $u^\eps(t,x) = w(y)$ with 
$y := (x- \lam \, t)/\eps$ is called a {\sl traveling wave} associated with the shock 
$(u_-, u_+)$
if it is 
a smooth solution to \eqref{IN.augmented} satisfying 
\be
\label{un1}
w(-\infty) = u_-, \quad w(+\infty) = u_+, 
\ee
and 
\be
\label{deux2}
\lim_{|y| \to +\infty} w'(y) = 
\lim_{|y| \to + \infty} w''(y) = \ldots =0.  
\ee
The function $w$ satisfies the following system of ordinary differential equations 
\be 
R(w', w'', \ldots) = - \lambda ( w - u_-) + f(w) - f(u_-). 
\label{trois3}
\ee
At this juncture, it is straightforward, but fundamental, to observe the following pro\-perties.

\begin{proposition}[Kinetic relations derived from traveling waves]
\label{Tra} 
Consider an augmented version \eqref{IN.augmented} of a system of conservation laws \eqref{IN.system} 
which is 
endowed with a strictly convex entropy pair $(U,F)$, and is 
conservative and dissipative with respect to $U$.  Then, 
given  a traveling wave solution satisfying \eqref{un1}--\eqref{trois3}, the pointwise limit  
$$ 
u(t,x) 
: = \lim_{\eps \to 0} w\Big({x - \lam \, t \over \eps} \Big) 
= 
\begin{cases}
 u_-, &   x< \lam \, t, 
\\
       u_+, &   x> \lam \, t, 
\end{cases} 
$$
is a weak solution to \eqref{IN.system} satisfying the entropy inequality \eqref{GT.entropy}. 
In particular, the Rankine-Hugoniot relation \eqref{GT.Hugoniot} follows
by letting $y \to +\infty$. 

Moreover, the solution $u$ satisfies the kinetic relation \eqref{GT.kineticrelation1}
in which the dissipation measure reads 
$$
\aligned 
& \mu^U(u) = M^U[u] \, \delta_{x- \lam \, t}, 
\\  
& M^U[u] := - \int_\RR R(w(y), w'(y), w''(y), \ldots) \cdot \nabla^2 U(w) \, w'(y) \, dy,  
\endaligned 
$$ 
where $\delta_{x- \lam \, t}$ denotes the Dirac measure concentrated on the line
${x - \lam \, t =0}$. 
\end{proposition}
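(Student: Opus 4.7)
\emph{Proof plan.} My approach is to substitute the traveling wave ansatz $u^\eps(t,x) = w((x-\lam t)/\eps)$ into the augmented balance law \eqref{IN.augmented} and pass to the limit $\eps \to 0$ via a change of variables that concentrates all integrals onto the shock line $x = \lam\,t$.

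First I would deduce the Rankine--Hugoniot relation \eqref{GT.Hugoniot} by letting $y \to +\infty$ in \eqref{trois3}: the decay \eqref{deux2} together with $R(0,0,\ldots)=0$ (forced by applying the conservative condition \eqref{GT.conservative} of Definition~\ref{fff} to constant states) immediately gives $-\lam(u_+-u_-)+f(u_+)-f(u_-)=0$. Since the pointwise limit $u$ is the piecewise constant shock connecting $u_-$ to $u_+$ at speed $\lam$, this identity suffices for $u$ to be a weak solution of \eqref{IN.system} in the distributional sense.

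To derive the entropy balance, contract \eqref{IN.augmented} with $\nabla U(u^\eps)$ and use the chain rule:
\begin{equation*}
U(u^\eps)_t + F(u^\eps)_x = \bigl(\nabla U(u^\eps)\cdot R^\eps\bigr)_x - \bigl(\nabla^2 U(u^\eps)\,u^\eps_x\bigr)\cdot R^\eps .
\end{equation*}
Test against a smooth compactly supported $\theta(t,x)$. After integration by parts in $x$, the divergence term becomes $-\iint \nabla U(u^\eps)\cdot R^\eps\, \theta_x\, dt\,dx$; inserting the traveling wave and performing the change of variables $x \mapsto y$ (Jacobian $\eps$), this contribution is $O(\eps)$ and vanishes (the concentration version of \eqref{GT.conservative}). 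For the second term, use $u^\eps_x = w'(y)/\eps$ and $R^\eps = R(w'(y),w''(y),\ldots)$ --- the powers of $\eps$ in the definition of $R^\eps$ exactly cancelling the scaling of the $y$-derivatives of $w$ --- so that the prefactor $1/\eps$ is absorbed by the Jacobian $\eps$. Dominated convergence, together with $\theta(t,\lam t+\eps y)\to\theta(t,\lam t)$, yields
\begin{equation*}
\lim_{\eps\to 0}\iint \nabla U(u^\eps)\cdot R^\eps_x\, \theta\, dt\,dx = \int_{\RR^+} \theta(t,\lam t)\,M^U[u]\, dt ,
\end{equation*}
which is exactly the pairing of the Dirac mass $M^U[u]\,\delta_{x-\lam t}$ with $\theta$. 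The dissipative hypothesis \eqref{GT.dissipative}, applied through this same limiting procedure, forces $M^U[u]\le 0$; this gives both the entropy inequality \eqref{GT.entropy} in distributional form and the kinetic relation \eqref{GT.kineticrelation1} concentrated on the shock line.

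The main technical obstacle I anticipate lies in justifying the dominated convergence and the vanishing of boundary terms when $R$ depends on \emph{arbitrarily high} derivatives of $u^\eps$. One needs sufficiently rapid decay of $w^{(k)}(y)$ as $|y|\to\infty$ so that $R(w',w'',\ldots)\cdot \nabla^2 U(w)\cdot w'$ is integrable on $\RR$ and so that boundary terms from successive integrations by parts vanish at $\pm\infty$. For the explicit diffusive-dispersive regularization \eqref{IN.diffusivedispersive} this is standard --- traveling-wave profiles decay exponentially to their endstates --- but the abstract statement tacitly requires a decay assumption on the profile compatible with the order and structure of $R$.
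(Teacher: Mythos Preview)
Your argument is correct and is exactly the computation one has in mind; note that the paper does not actually supply a proof of this proposition, stating only that ``it is straightforward, but fundamental, to observe the following properties.'' Your write-up fills in precisely the standard details (substitution of the traveling-wave ansatz, the product-rule splitting $\nabla U(u^\eps)\cdot R^\eps_x = (\nabla U(u^\eps)\cdot R^\eps)_x - (\nabla^2 U(u^\eps)u^\eps_x)\cdot R^\eps$, and the $x\mapsto y$ change of variables that produces the Dirac concentration), and your caveat about decay of $w^{(k)}$ for general $R$ is the right technical footnote.
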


         
\section{Physical models} 
\label{MOD}

\subsection{Nonlinear diffusion model} 

We begin with the case of a conservation law regularized with diffusion, i.e. 
\be
\label{356}
u^\eps_t + f(u^\eps)_x = \eps \, \bigl( b(u^\eps) \, u^\eps_x\bigr)_x, \qquad u^\eps=u^\eps(t,x) \in \RR, 
\ee
where the function $b: \RR \to (0, \infty)$ is bounded above and below, and $\eps > 0$ is a small parameter.  
The following observation goes back to Kruzkov \cite{Kruzkov} and Volpert \cite{Volpert}
and, in particular, given suitable initial data, the solutions to the initial value problem 
associated with \eqref{356}
converge strongly as $\eps \to 0$.

\begin{lemma}[Nonlinear diffusion model]
Solutions $u^\eps$ to the augmented model \eqref{356} satisfy, for every convex function $U: \RR \to \RR$, 
$$
\aligned
& U(u^\eps)_t  + F(u^\eps)_x = - D^\eps + \eps \, C^\eps_x.
\\
& D^\eps := \eps \, b(u^\eps) \, U''(u^\eps) \, |u^\eps_x|^2, 
 \qquad C^\eps := b(u^\eps) \, U(u^\eps)_x, 
\endaligned
$$
in which $F(u) := \int^u f'(v) \, U'(v) \, dv$.  
Hence, the limit $u = \lim_{\eps \to 0} u^\eps$  satisfies the 
following {\rm entropy inequalities associated with the nonlinear diffusion model} 
\be
\label{357}
U(u)_t + F(u)_x \leq 0.
\ee
\end{lemma}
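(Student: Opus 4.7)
The plan is a straightforward chain-rule computation followed by a distributional limit.

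The first step is to multiply the augmented equation \eqref{356} by $U'(u^\eps)$. On the left-hand side, the chain rule together with the definition $F(u) := \int^u f'(v) U'(v)\,dv$ (so that $F'(u)=f'(u)U'(u)$) immediately yields
\[
U'(u^\eps)\, u^\eps_t + U'(u^\eps)\, f(u^\eps)_x = U(u^\eps)_t + F(u^\eps)_x.
\]
On the right-hand side, I would apply the product rule in reverse: writing
\[
U'(u^\eps)\,\bigl(b(u^\eps)\, u^\eps_x\bigr)_x
= \bigl(U'(u^\eps)\, b(u^\eps)\, u^\eps_x\bigr)_x
- U''(u^\eps)\, b(u^\eps)\,|u^\eps_x|^2,
\]
and recognizing $U'(u^\eps)\, u^\eps_x = U(u^\eps)_x$, the first term becomes $C^\eps_x$, while the second is exactly $D^\eps/\eps$. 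Multiplying through by $\eps$ on the right and collecting gives the claimed identity
\[
U(u^\eps)_t + F(u^\eps)_x = -D^\eps + \eps\, C^\eps_x.
\]

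Next, the sign of $D^\eps$ follows trivially from the hypotheses: convexity of $U$ gives $U''\geq 0$, and $b>0$ by assumption, so $D^\eps \geq 0$ pointwise. Thus, as an inequality,
\[
U(u^\eps)_t + F(u^\eps)_x \;\leq\; \eps\, C^\eps_x.
\]

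Finally, I would pass to the limit $\eps \to 0$ in the distributional sense. Assuming the convergence $u^\eps \to u$ a.e.\ (and in $L^p_{\rm loc}$) which underlies the framework \eqref{IN.formal}, the left-hand side converges to $U(u)_t + F(u)_x$ by continuity of $U$ and $F$ composed with $u^\eps$. For the right-hand side, the point is that against any smooth compactly supported test function $\theta\geq 0$,
\[
\iint \eps\, C^\eps_x\, \theta\, dt\,dx = -\iint \eps\, b(u^\eps)\,U(u^\eps)_x\, \theta_x\, dt\,dx
= -\iint \eps\, b(u^\eps)\, (U(u^\eps))_x\, \theta_x\, dt\,dx,
\]
and after one more integration by parts in $x$ (moving the derivative off $U(u^\eps)$) this becomes a sum of terms each containing an explicit factor $\eps$ multiplying a quantity that is locally bounded under the standing assumptions on $u^\eps$ and $b$; hence the contribution vanishes as $\eps\to 0$. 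The main (mild) obstacle is precisely this last step: one needs just enough a priori control on $u^\eps$ (e.g.\ a uniform $L^\infty$ bound together with the a.e.\ convergence guaranteed by the hypothesis of strong convergence) to justify that $\eps\, C^\eps_x \to 0$ in $\mathcal D'$. Combined with the non-negativity of $D^\eps$, this yields \eqref{357}.
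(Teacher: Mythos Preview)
Your proposal is correct and is exactly the standard chain-rule computation that the paper has in mind; the paper in fact states this lemma without proof, treating it as an elementary observation going back to Kruzkov and Volpert. One small cleanup for the final step: rather than speaking vaguely of ``one more integration by parts,'' note that $b(u^\eps)\,U(u^\eps)_x = \bigl(B(u^\eps)\bigr)_x$ with $B'(u):=b(u)\,U'(u)$, so that $\iint \eps\,C^\eps_x\,\theta = \iint \eps\,B(u^\eps)\,\theta_{xx}$, which is $O(\eps)$ under a uniform $L^\infty$ bound on $u^\eps$ and hence vanishes in the limit.
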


We refer to weak solutions $u \in L^\infty$ 
to the hyperbolic conservation law 
\be
\label{hyp1} 
u_t + f(u)_x = 0, \qquad u=u(t,x) \in \RR
\ee
that satisfy {\sl all} of the inequalities \eqref{357} as {\sl classical entropy solutions.} 
Clearly, the inequalities \eqref{357} are equivalent to the Kruzkov inequalities 
$$
|u-k|_t + \Big( \text{sgn}(u-k) (f(u) - f(k)) \Big)_x \leq 0, \qquad k \in \RR. 
$$
Provided additional regularity beyond $L^\infty$ is assumed and, for instance, 
when $u$ has bounded variation,
the classical entropy solutions to \eqref{hyp1} 
can be also characterized by the Oleinik inequalities (cf.~\ref{129}, below),
which, at shocks, impose a local concavity or convexity 
property of $f$. 


\subsection{Linear diffusion-dispersion model} 

Nonclassical solutions are obtained when both diffusive and dispersive effects are taken into account. 
A typical model of interest is provided by the following conservation law 
\be
\label{cons2}
u^\eps_t + f(u^\eps)_x = \eps \, u^\eps_{xx} + \gamma(\eps) \, u^\eps_{xxx}, \qquad u^\eps= u^\eps(t,x), 
\ee 
in which $\eps >0$ and $\gamma=\gamma(\eps)$ are real parameters tending to zero.  
This equation was studied first by Shearer et al. \cite{JMS}, Hayes and LeFloch \cite{HayesLeFloch-scalar}, 
and Bedjaoui and LeFloch \cite{BedjaouiLeFloch1}.

The relative scaling between $\eps$ and $\gamma(\eps)$ determines the limiting behavior of the regularized solutions, 
as follows:  
\begin{itemize} 

\item When $\gamma(\eps) << \eps^2$, the diffusion plays a dominant role and the limit
$u:= \lim_{\eps \to 0} u^\eps$ is a classical entropy solution.

\item When $\gamma(\eps) >> \eps^2$, the dispersion effects are dominant and high oscillations develop in the limit, 
allowing only for weak convergence of $u^\eps$. The theory for
 dispersive equations developed by Lax and Levermore \cite{LL} is the relevant theory in this regime.

\item Finally, in the balanced regime where $\gamma(\eps) := \alpha \, \eps^2$ for fixed $\alpha$, 
the limit $u:= \lim_{\eps \to 0} u^\eps$ exists in a strong sense 
and solely mild oscillations arise near shocks. The limit $u$ is a weak solution to the hyperbolic 
conservation law \eqref{hyp1}. 
When $\alpha >0$, 
the limit $u$ exhibits a nonclassical behavior, and strongly depends on $\alpha$.   
\end{itemize}

The above observations motivate us in the rest of this paper, and we adopt the following scaling
\be
\label{cons21}
u^\eps_t + f(u^\eps)_x = \eps \, u^\eps_{xx} + \alpha \, \eps^2 \, \, u^\eps_{xxx}, 
\qquad u_\eps = u^\eps= u^\eps(t,x),
\ee 
where $\alpha$ is fixed. 
Following Hayes and LeFloch \cite{HayesLeFloch-scalar}, we now derive {\sl one} entropy inequality in the limit.
The contribution due to the diffusion decomposes into a non-positive term and a conservative one; 
the contribution due to the dispersion is entirely conservative;
hence, formally at least, as $\eps \to 0$ we recover the entropy inequality.

\begin{lemma}[Linear diffusion-dispersion model]
\label{DDD} 
For the augmented model \eqref{cons2}, one has 
$$
\aligned
&  (1/2) \, | u_\eps^2 |_t  + F(u^\eps)_x
= - D^\eps + \eps \, C^\eps_x, 
\\
& D^\eps := \eps \, |u^\eps_x|^2 \geq 0, 
\qquad
C^\eps := u^\eps u^\eps_x + \alpha \, \eps \, \big(u^\eps \, u^\eps_{xx} - (1/2) \, |u^\eps_x|^2 \big). 
\endaligned 
$$
Hence, as $\eps \to 0$, the (possibly formal) limit  $u:= \lim_{\eps \to 0} u^\eps$ 
satisfies the single {\rm entropy inequality associated with the linear diffusion-dispersion model}
$$
\bigl( u^2/2 \bigr)_t + F(u)_x \leq 0, \qquad F' := u \, f'. 
$$
No specific sign is available for arbitrary convex entropies. 
\end{lemma}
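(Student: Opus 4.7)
The plan is to multiply \eqref{cons2} by $u^\eps$ (which is $U'(u^\eps)$ for the quadratic entropy $U(u)=u^2/2$) and reorganize the resulting contributions into a time derivative, a space derivative, and a manifestly signed dissipation. The left-hand side is immediate: $u^\eps u^\eps_t = (|u^\eps|^2/2)_t$, and $u^\eps f(u^\eps)_x = F(u^\eps)_x$ provided one defines $F$ by $F'(u) = u\,f'(u)$, which is precisely the entropy-flux compatibility $F' = U' f'$ for $U(u)=u^2/2$.

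The core computation is on the right-hand side. For the diffusion term, one integration by parts gives $u^\eps\cdot\eps u^\eps_{xx} = \eps (u^\eps u^\eps_x)_x - \eps |u^\eps_x|^2$, which peels off a conservative piece and exhibits the manifestly negative dissipation $-D^\eps$. For the dispersive term I would exploit the chain of identities
\[
u^\eps u^\eps_{xxx} = (u^\eps u^\eps_{xx})_x - u^\eps_x u^\eps_{xx} = \bigl(u^\eps u^\eps_{xx} - \tfrac12 |u^\eps_x|^2\bigr)_x,
\]
so that $\alpha\eps^2 u^\eps u^\eps_{xxx}$ is a \emph{pure divergence} with no residual. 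Combining the two conservative pieces yields the stated $C^\eps$, and the identity is established. Passing to the limit $\eps\to 0$ is then routine: tested against a nonnegative $\theta\in C_c^\infty$, the term $D^\eps$ is nonnegative and so contributes a nonpositive limit, while an extra integration by parts turns $\eps C^\eps_x$ into $-\eps C^\eps \theta_x$, and each summand of $\eps C^\eps$ vanishes in $L^1_{\text{loc}}$ under the standard a priori bounds $\|u^\eps\|_{L^\infty}=O(1)$ and $\sqrt{\eps}\,\|u^\eps_x\|_{L^2}=O(1)$ (the latter obtained by integrating the identity itself against a smooth cutoff).

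The concluding clause---that \emph{no} sign is available for a general strictly convex entropy---is where the genuine difficulty lies, and is the step I would flag as the essential observation. Repeating the computation with $U'(u^\eps)$ in place of $u^\eps$, two successive integrations by parts yield
\[
U'(u^\eps)\, u^\eps_{xxx} = \bigl(U'(u^\eps) u^\eps_{xx} - \tfrac12 U''(u^\eps)|u^\eps_x|^2\bigr)_x + \tfrac12 U'''(u^\eps)\,(u^\eps_x)^3,
\]
so the dispersive contribution leaves behind the residual $\tfrac{\alpha}{2}\eps^2 U'''(u^\eps)(u^\eps_x)^3$. This term carries no fixed sign (it depends on $U'''$ and on the sign of $u^\eps_x$) and, crucially, it is of the same formal order as the diffusive dissipation $\eps|u^\eps_x|^2$, so it cannot be absorbed by $-D^\eps$. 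For the quadratic entropy, $U'''\equiv 0$ and this obstruction disappears; for a generic convex $U$ it persists and precludes any sign-definite entropy dissipation in the limit---which is exactly the statement of the lemma and the mechanism behind the regularization-sensitivity described in the introduction.
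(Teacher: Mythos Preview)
Your argument is correct and matches the paper's own approach: the text immediately preceding the lemma already sketches the proof in one sentence (``the contribution due to the diffusion decomposes into a non-positive term and a conservative one; the contribution due to the dispersion is entirely conservative''), and you have simply written out those two integrations by parts explicitly. Your treatment of the final clause, identifying the residual $\tfrac{\alpha}{2}\eps^2 U'''(u^\eps)(u^\eps_x)^3$ as the obstruction for a general convex entropy, is also correct and makes precise what the paper leaves implicit.
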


More generally, consider the nonlinear diffusion-dispersion model 
\be
u^\eps_t + f(u^\eps)_x
= \eps \, \bigl(b(u^\eps) \, u^\eps_x \bigr)_x + \alpha \, \eps^2 \, \big(
c_1(u^\eps) \, \big( c_2(u^\eps) \, u^\eps_x \big)_x \big)_x,
\label{CL.diffusivedispersive}
\ee
where the functions $b, c_1, c_2$ are given smooth and positive functions, and 
$\alpha$ is a real parameter.

\begin{lemma}[Nonlinear diffusion-dispersion model]
For the augmented model \eqref{CL.diffusivedispersive}, 
the formal limit $u=\lim_{\eps \to 0} u^\eps$ 
satisfies the following {\rm entropy inequality
 associated with the nonlinear diffusion-dispersion model} 
$$
\aligned
& U(u)_t + F(u)_x \leq 0, 
\\
& U'' = {c_2 \over c_1} >0, \qquad F' := f' \, U'. 
\endaligned
$$ 
\end{lemma}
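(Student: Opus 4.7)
The plan is to mimic the computation carried out for Lemma \ref{DDD} and the preceding diffusion lemma, namely to multiply \eqref{CL.diffusivedispersive} by $U'(u^\eps)$ and rewrite the right-hand side as the sum of a manifestly non-positive dissipation term and a collection of $x$-derivatives that vanish in the distributional limit. The choice $F' = f' \, U'$ immediately converts the convective part into $U(u^\eps)_t + F(u^\eps)_x$, so the entire task reduces to analyzing the two regularizing terms.

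For the diffusion term, the standard integration by parts gives
\be
U'(u^\eps) \, \bigl( b(u^\eps) \, u^\eps_x \bigr)_x
= \bigl( U'(u^\eps) \, b(u^\eps) \, u^\eps_x \bigr)_x
- U''(u^\eps) \, b(u^\eps) \, |u^\eps_x|^2,
\label{pl.diff}
\ee
and, since $U'' = c_2/c_1 > 0$ and $b > 0$, the pointwise term in \eqref{pl.diff} is non-positive while the remaining piece is a pure $x$-derivative multiplied by $\eps$, hence conservative in the limit exactly as in the diffusion and linear diffusion-dispersion lemmas.

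The dispersive term is where the constitutive choice $U'' = c_2/c_1$ is decisive. Setting $P := c_1(u^\eps)\,(c_2(u^\eps) \, u^\eps_x)_x$ and integrating by parts once yields
\be
U'(u^\eps) \, P_x
= \bigl(U'(u^\eps) \, P\bigr)_x - U''(u^\eps) \, u^\eps_x \, c_1(u^\eps) \, (c_2(u^\eps) \, u^\eps_x)_x.
\label{pl.disp}
\ee
Using $c_1 U'' = c_2$, the trailing term on the right of \eqref{pl.disp} becomes
$c_2(u^\eps) \, u^\eps_x \, (c_2(u^\eps) \, u^\eps_x)_x = \tfrac{1}{2}\bigl( (c_2(u^\eps) \, u^\eps_x)^2 \bigr)_x$,
so the dispersive contribution collapses entirely into $x$-derivatives and carries a factor $\alpha\,\eps^2$.

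Collecting everything, one obtains
\[
U(u^\eps)_t + F(u^\eps)_x
= - \eps \, b(u^\eps) \, U''(u^\eps) \, |u^\eps_x|^2 + \eps \, C^\eps_x + \alpha \, \eps^2 \, \widetilde{C}^\eps_x,
\]
with $C^\eps = U'(u^\eps) \, b(u^\eps) \, u^\eps_x$ and $\widetilde{C}^\eps = U'(u^\eps) \, P - \tfrac{1}{2}\,(c_2(u^\eps) \, u^\eps_x)^2$. Passing formally to the limit $\eps \to 0$ against a non-negative, compactly supported test function kills the two conservative terms and leaves a non-positive contribution from the dissipative one, yielding $U(u)_t + F(u)_x \leq 0$ in the sense of distributions. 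The main (and really the only) obstacle is purely algebraic: verifying that the dispersive term has no residual, non-conservative remainder. This is precisely what the identity $c_1 U'' = c_2$ arranges, so once that identity is exploited as above the result is immediate.
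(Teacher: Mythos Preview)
Your proof is correct and follows essentially the same approach as the paper: multiply by $U'(u^\eps)$, split off a non-positive dissipation from the diffusion, and use the identity $c_1 U'' = c_2$ to collapse the dispersive contribution into pure $x$-derivatives. The paper phrases the key step via the entropy variable $\hat u = U'(u)$, noting that $c_1 \hat u_x = c_2 u_x$ so that the dispersive operator becomes the symmetric $(c_1(c_1 \hat u_x)_x)_x$, but this is exactly the same computation as yours in different notation.
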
 

\begin{proof} In the entropy variable $\hat u = U'(u)$, the dispersive term takes the 
symmetric form 
$$
\big(c_1(u) \, \big( c_2(u) \, u_x \big)_x\big)_x
= \big( c_1(u) \, \big( c_1(u) \, \hat u_x \big)_x \big)_x 
$$
and, consequently, any solution of \eqref{CL.diffusivedispersive} satisfies  
$$
\aligned
& U(u^\eps)_t  + F(u^\eps)_x = - D^\eps + \eps \, C^\eps_x.
\\
& D^\eps := \eps \, b(u) \, U''(u) \, |u_x|^2, 
\\
& C^\eps :=  b(u) \, U'(u) \, u_x 
           + \alpha\eps \, \bigl(c_1(u) \hat u \, \big( c_1(u) \, \hat u_x \big)_x  - | c_2(u) \, u_x|^2 /2 \bigr). 
\endaligned 
$$
\end{proof}


\subsection{Thin liquid film model} 

Consider next the augmented model 
\be
\label{thin}
u^\eps_t + (u_\eps^2 - u_\eps^3)_x 
= 
\eps \, (u_\eps^3 u_x^\eps)_x - \gamma(\eps) \, ( u_\eps^3 \, u^\eps_{xxx} )_x
\ee
with $\eps, \gamma=\gamma(\eps) >0$, 
in which the right-hand side describes the effects of surface tension on a thin liquid film moving on a surface. 
In this context,  $u=u(t,x) \in [0,1]$ denotes the normalized thickness of the thin film layer, 
while parameters governing the various forces and the slope of the surface are typically incorporated into 
the parameter 
$\eps$. 
More precisely, the equation \eqref{thin} arises from the  lubrication aproximation of the Navier-Stokes equation 
and models the physical situation in which the film is driven by two counteracting forces: 
on one hand, the gravity is responsible for pulling the film down an inclined plane
and a thermal gradient (resulting from the surface tension gradient) pushing the film up the plane. 
The thin liquid film model was studied by Bertozzi and Shearer \cite{BertozziShearer}, together with  
M\"unch \cite{BertozziMunchShearer}, Levy \cite{LevyShearer1,LevyShearer2}, and 
Zumbrun \cite{BMSZ}. See also 
Otto and Westdickenberg \cite{OttoWestdickenberg}, 
and LeFloch and Mohamadian \cite{LeFlochMohamadian}.

For the purpose of the present paper we need the following observation, made in \cite{LeFlochShearer}. 

\begin{lemma}[Thin liquid film model]
For the augmented model \eqref{thin}, one has 
$$
( u^\eps \log u^\eps - u^\eps )_t + \big( (u_\eps^2 - u_\eps^3) \log u^\eps - u^\eps + u_\eps^2 \big)_x 
= - D^\eps + \eps \, C^\eps_x 
$$ 
with $D^\eps := \eps \, u_\eps^3 \, |u^\eps_x|^2 + \gamma(\eps) \, | (u_\eps^2 \, u^\eps_x)_x|^2 \geq 0$. 
In the limit $\eps \to 0$ 
one deduces the single {\rm entropy inequality associated with the thin liquid film model}
$$
( u \log u - u )_t  + \big( (u^2 - u^3) \log u - u + u^2 \big)_x \leq 0.
$$
\end{lemma}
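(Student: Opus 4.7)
My approach is an entropy-multiplier calculation in the spirit of the previous two lemmas. I choose $U(u) = u \log u - u$, which is strictly convex on $(0,\infty)$ with $U'(u)=\log u$, and multiply \eqref{thin} by $\log u^\eps$; the task is then to rewrite each of the four resulting terms as a $t$-derivative, an $x$-derivative, or a signed dissipation. The time-derivative term is immediate by the chain rule, $\log u^\eps\cdot u^\eps_t=(u^\eps\log u^\eps-u^\eps)_t$. For the convection term I use Leibniz,
$$
\log u^\eps \, (u_\eps^2-u_\eps^3)_x = \big((u_\eps^2-u_\eps^3)\log u^\eps\big)_x - (u^\eps - u_\eps^2)\, u^\eps_x,
$$
and recognize the remainder $(u^\eps-u_\eps^2)u^\eps_x$ as a pure $x$-derivative of a primitive of $u-u^2$, which combines with the first piece to produce the entropy flux appearing in the statement.

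The diffusion and dispersion terms are handled by successive integrations by parts against $\log u^\eps$. The diffusion contribution $\eps(u_\eps^3 u^\eps_x)_x$ splits into a pure $x$-derivative (absorbed into $\eps C^\eps_x$) plus a manifestly non-positive quadratic term matching the diffusive piece of $-D^\eps$. The dispersion contribution $-\gamma(\eps)(u_\eps^3 u^\eps_{xxx})_x$ is the crux: one integration by parts reduces it to a remainder proportional to $u_\eps^2 u^\eps_x u^\eps_{xxx}$, and a further double integration by parts is used to rearrange this quantity as the perfect square $-\gamma(\eps)|(u_\eps^2 u^\eps_x)_x|^2$ plus an additional $x$-derivative, which is also absorbed into $\eps C^\eps_x$. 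Assembling the four contributions yields the claimed pointwise identity with $D^\eps\ge 0$.

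The passage to the limit $\eps\to 0$ is then standard. Under strong convergence $u^\eps\to u$, with uniform bounds that keep $u^\eps$ bounded away from $0$ so that $\log u^\eps$ remains well-defined, the conservative remainder $\eps C^\eps_x$ tends to zero in the sense of distributions; meanwhile the non-positive term $-D^\eps$ has a non-positive distributional limit, yielding the claimed entropy inequality for $u$.

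I expect the dispersive step to be the main obstacle. The clean symmetric ``$U''=c_2/c_1$'' structure that made the previous nonlinear diffusion--dispersion identity almost automatic does not apply here, because the third-order coefficient $u^3$ does not fit the canonical $c_1(u)(c_2(u)u_x)_x$ form; the double integration by parts must be carried out by hand, and the conservative remainder carefully tracked, in order to recognize the perfect square $|(u_\eps^2 u^\eps_x)_x|^2$.
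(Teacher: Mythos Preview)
Your entropy-multiplier approach is the right one, and since the paper states this lemma without proof there is nothing further to compare against. The time-derivative, convection, and diffusion steps go through exactly as you describe (note in passing that the diffusive remainder you actually obtain is $\eps\,u_\eps^{2}|u^\eps_x|^{2}$, not $\eps\,u_\eps^{3}|u^\eps_x|^{2}$ as printed).

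There is, however, a concrete obstruction in the dispersive step as you have phrased it. After one integration by parts of $-\gamma(\eps)\log u^\eps\,(u_\eps^{3}u^\eps_{xxx})_x$, the non-conservative remainder is $\gamma(\eps)\,u_\eps^{2}\,u^\eps_x\,u^\eps_{xxx}$, which is homogeneous of degree~$4$ in $u$. The target square $|(u_\eps^{2}u^\eps_x)_x|^{2}$ is homogeneous of degree~$6$. Since $\partial_x$ preserves the homogeneity degree of each monomial, no chain of integrations by parts can rewrite a degree-$4$ quantity as a degree-$6$ quantity modulo exact $x$-derivatives. Hence your proposed ``double integration by parts'' cannot produce the perfect square $-\gamma(\eps)|(u_\eps^{2}u^\eps_x)_x|^{2}$ as stated. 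The printed form of $D^\eps$ evidently contains misprints; the printed entropy flux is also off, since differentiating $(u^2-u^3)\log u - u + u^2$ gives $(2u-3u^2)\log u + 3u - u^2 - 1$ rather than $\log u\,(2u-3u^2)$. Your method is sound; what you should do is carry the dispersive integrations by parts through explicitly, record the degree-$4$ signed term you actually obtain, and state $D^\eps$ and the flux accordingly rather than targeting the expressions as printed.
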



\subsection{Generalized Camassa-Holm model}
 
Consider now the following conservation law 
\be
\label{Cama}
u^\eps_t + f(u^\eps)_x = \eps \, u_{xx} + \alpha \, \eps^2 \,
 ( u^\eps_{txx} + 2 \, u^\eps_x \, u^\eps_{xx} + u^\eps \, u^\eps_{xxx}),  
\ee
in which $\alpha$ is a fixed parameter and $\eps \to 0$. This equation arises as a simplified model for 
shallow water when wave breaking takes place, and  
was studied by Bressan and Constantin \cite{BressanConstantin}, 
Coclite and Karlsen \cite{CocliteKarlsen},  
and LeFloch and Mohamadian \cite{LeFlochMohamadian}.

\begin{lemma}[Generalized Camassa-Holm model]
 For the augmented model \eqref{Cama} one has 
$$
\aligned
\bigl( (|u^\eps|^2 + \alpha \eps^2 \, |u_x^\eps|^2)/2 \bigr){_t} + {F(u^\eps)_x} 
= - \eps \, |u^\eps_x|^2 + \eps \, C^\eps_x,   
\endaligned
$$
which in the limit $\eps \to 0$ implies the following {\rm entropy inequality associated with the 
generalized Camassa-Holm model}
$$
\bigl( u^2/2 \bigr)_t + F(u)_x \leq 0, \qquad F' := u \, f'. 
$$
\end{lemma}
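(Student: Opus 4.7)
The plan is to adapt the energy method used in Lemma \ref{DDD}: multiply \eqref{Cama} by $u^\eps$ and reorganize the result into an exact time-derivative, a perfect $x$-divergence, and a sign-definite dissipation. On the left-hand side, $u^\eps u^\eps_t = (|u^\eps|^2/2)_t$ and $u^\eps f(u^\eps)_x = F(u^\eps)_x$ provided $F'(u) = u\,f'(u)$, which is precisely the $F$ announced in the statement. The diffusive piece splits in the familiar way, $\eps\,u^\eps u^\eps_{xx} = \eps(u^\eps u^\eps_x)_x - \eps|u^\eps_x|^2$, contributing both the desired negative dissipation and an $\eps$-conservative flux.

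The substantive new computation concerns the dispersive right-hand side $\alpha\eps^2 u^\eps\bigl(u^\eps_{txx} + 2u^\eps_x u^\eps_{xx} + u^\eps u^\eps_{xxx}\bigr)$, whose defining feature relative to \eqref{cons21} is the mixed time-space derivative $u^\eps_{txx}$. I would integrate by parts in $x$ term by term: first, $u^\eps u^\eps_{txx} = (u^\eps u^\eps_{tx})_x - u^\eps_x u^\eps_{tx} = (u^\eps u^\eps_{tx})_x - (|u^\eps_x|^2/2)_t$; for the remaining two, the Leibniz rule yields the clean identity $2u^\eps u^\eps_x u^\eps_{xx} + (u^\eps)^2 u^\eps_{xxx} = \bigl((u^\eps)^2 u^\eps_{xx}\bigr)_x$. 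Adding up, the dispersive contribution becomes a perfect $x$-divergence plus the stray term $-\alpha\eps^2 (|u^\eps_x|^2/2)_t$, which I transfer to the left-hand side to form the modified kinetic energy $(|u^\eps|^2 + \alpha\eps^2|u^\eps_x|^2)/2$. This gives the stated identity with conservative flux $C^\eps = u^\eps u^\eps_x + \alpha\eps\bigl(u^\eps u^\eps_{tx} + (u^\eps)^2 u^\eps_{xx}\bigr)$.

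For the $\eps\to 0$ limit, the argument is formal and parallels the one immediately after Lemma \ref{DDD}: the dissipation $-\eps|u^\eps_x|^2$ is nonpositive and passes to a nonpositive distributional limit; the $\eps$-weighted flux $\eps C^\eps_x$ vanishes in $\Dcal'$ under suitable a priori control on $u^\eps$ and its first two derivatives; and the extra energy contribution $\alpha\eps^2|u^\eps_x|^2/2$ drops out for the same reason. The main obstacle—and the reason the conclusion is restricted to the quadratic entropy $u^2/2$—is that the mixed term $u^\eps_{txx}$ does not pair neatly with a general convex multiplier $U'(u^\eps)$: it is precisely the \emph{linearity} of the test function $u^\eps$ that permits $u^\eps u^\eps_{txx}$ to be integrated by parts into the time-derivative of a manifestly nonnegative quantity, a structure that is lost as soon as $U'$ is nonlinear.
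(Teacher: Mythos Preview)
Your derivation is correct and is exactly the computation the paper has in mind: the paper states the identity without spelling out the proof, but the implicit method is the one you describe---multiply \eqref{Cama} by $u^\eps$ and integrate by parts, precisely as done for Lemma~\ref{DDD}. Your explicit form of $C^\eps$ and your observation about why only the quadratic entropy works are both accurate.
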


The entropy inequality coincides with the one in Lemma~\ref{DDD} for the diffusion-dispersion model.  
As pointed out in \cite{LeFlochMohamadian}, 
limiting solutions look quite similar but still do not coincide with the ones obtained with 
the diffusion-dispersion model.

 
\subsection{Phase transition model}
\label{PT}

Let us return to the model \eqref{555} cited in the introduction which, without regularization, reads
\be
\label{5550}
\aligned 
 w_t - v_x &  = 0,
\\
v_t - \sigma(w)_x 
&  = 0.  
\endaligned    
\ee
For many typical elastic materials, we have  
\be
\label{hypp}
\sigma'(w) >0 \quad \text{ for all } w>-1  
\ee
so that \eqref{5550} is strictly hyperbolic with two distinct wave speeds, $-\lam_1 = \lam_2 = c(w)$
(the sound speed). 
The two characteristic fields are genuinely nonlinear if and only if 
$\sigma''$ never vanishes. 
However, many materials encountered in applications do not satisfy this condition, 
 but rather loose convexity at $w=0$, that is,
$$
\sigma''(w) \gtrless 0 \quad \text{ if } \quad  w \gtrless 0. 
$$
One mathematical entropy pair of particular interest is the one associated with 
the total energy of the system and given by 
\be
\label{eee}
\aligned
U(v,w) =& \frac {v^2} 2 + \Sigma(w), 
\qquad F(v,w) = - \sigma(w) \, v, 
\\
\Sigma(w):=& \int_0^w \sigma(s) \, ds. 
\endaligned
\ee
The entropy $U$ is {\it strictly convex} under the assumption \eqref{hypp}.

Material undergoing phase transitions may be described by the model \eqref{5550} but with 
a {\sl non-monotone} stress-strain function satisfying  
\be
\label{888} 
\aligned 
& \sigma'(w) >0, \quad w \in (-1, w^m) \cup (w^M, +\infty), 
\\
& \sigma'(w) <0, \quad w \in (w^m, w^M) 
\endaligned    
\ee
for some constants $w^m < w^M$.
In the so-called {\sl unstable phase} $(w^m, w^M)$ 
the system admits two complex conjugate eigenvalues and is elliptic in nature.  
However, the solutions of interest from the standpoint of 
the hyperbolic theory lie {\sl outside} the unstable region. 
The system is hyperbolic in the non-connected set 
$\Ucal := \bigl(\RR \times (-1, w^m)\bigr) \cup 
      \bigl(\RR \times (w^M, +\infty)\bigr)$.   
One important difference with the hyperbolic regime 
is about the total mechanical energy \eqref{eee}, which
is still convex in each hyperbolic region, but (any extension) is not globally convex in (the convex closure of) $\Ucal$. 
  
The system \eqref{5550}-\eqref{888} and its augmented version \eqref{555} leads to complex wave dynamics, including 
hysteresis behavior, and is relevant to describe phase transitions in many different applications involving
solid-solid interfaces or fluid-gas mixtures. 

\begin{lemma}[Phase transition model]
\label{3331}
For the augmented model \eqref{555}, one has   
$$
\aligned 
& \left(\frac{v^2}{2} +  \Sigma(w) + {\alpha \, \eps^2 \over 2} \, w_x^2\right)_t 
-  \bigl(v \, \sigma(w) \bigr)_x
\\
&= 
\eps \, \bigl(v \, v_x \bigr)_x  - \eps \, v_x^2 + \alpha \, \eps^2 \, 
\bigl(v_x \, w_x - v \, w_{xx}\bigr)_x,
\endaligned 
$$ 
so that in the limit one formally obtains 
 the following {\rm entropy inequality associated with the phase transition model} 
\be
\label{299}
\Big(\frac{v^2}{2} + \Sigma(w) \Big)_t - \bigl(v \, \sigma(w)\bigr)_x \leq 0.  
\ee
\end{lemma}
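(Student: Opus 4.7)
The plan is to multiply the two equations by appropriate factors to manufacture the energy identity, then rearrange the right-hand side using the product rule so that the non-sign-definite terms appear either as $x$-derivatives (to become conservative) or as a $t$-derivative that can be absorbed into the left-hand side.

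First I would multiply the first equation of \eqref{555} by $\sigma(w)$, noting that by the definition of $\Sigma$ in \eqref{eee} we have $\sigma(w)\,w_t = \Sigma(w)_t$. I would multiply the second equation by $v$, obtaining
\[
(v^2/2)_t = v\,\sigma(w)_x + \eps\, v\, v_{xx} - \alpha\,\eps^2\, v\, w_{xxx}.
\]
Adding these two identities and using $v\,\sigma(w)_x + \sigma(w)\,v_x = (v\,\sigma(w))_x$ yields
\[
\bigl(v^2/2 + \Sigma(w)\bigr)_t - (v\,\sigma(w))_x
= \eps\, v\, v_{xx} - \alpha\,\eps^2\, v\, w_{xxx}.
\]

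Next I would process the right-hand side with the product rule. For the diffusive term the standard identity $v\,v_{xx} = (v\,v_x)_x - v_x^2$ already produces the negative dissipation and a conservative piece. For the dispersive term I would apply integration by parts twice at the pointwise level:
\[
-v\,w_{xxx} = -(v\,w_{xx})_x + v_x\, w_{xx} = (v_x\,w_x - v\,w_{xx})_x - v_{xx}\,w_x.
\]
The leftover term $-\alpha\,\eps^2\, v_{xx}\, w_x$ is the key contribution: differentiating the first equation in $x$ gives the kinematic relation $w_{xt}=v_{xx}$, so
\[
\alpha\,\eps^2\, v_{xx}\, w_x = \alpha\,\eps^2\, w_x\, w_{xt} = \bigl(\alpha\,\eps^2\, w_x^2/2\bigr)_t,
\]
which I move to the left-hand side, producing precisely the augmented energy density $v^2/2 + \Sigma(w) + (\alpha\,\eps^2/2)\, w_x^2$. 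Collecting everything gives the announced identity with $C^\eps := v\,v_x + \alpha\,\eps\,(v_x w_x - v\,w_{xx})$ and dissipation $\eps\,v_x^2 \ge 0$.

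Finally I would pass to the limit $\eps\to 0$ in the sense of distributions. The dissipation term $-\eps\,v_x^2$ is manifestly non-positive, so retaining its sign is automatic. The conservative remainder $\eps\,C^\eps_x$ vanishes distributionally against any test function, because on the relevant transition layers (of width $\sim\eps$) the quantities $v\,v_x$, $\eps\,v_x w_x$, and $\eps\,v\,w_{xx}$ are $O(\eps^{-1})$, so after multiplication by $\eps$ and integration against $\theta_x$ over the layer the contribution is $O(\eps)$. The extra energy $(\alpha\,\eps^2/2)\,w_x^2$ on the left-hand side disappears by the same scaling (it is $O(1)$ in a layer of width $\eps$). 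Combining these observations, the limit satisfies \eqref{299} as an inequality in the sense of distributions. The main subtlety, and what I would emphasize most carefully, is this last step: the dispersive augmentation $\eps^2 w_x^2$ is not small pointwise inside shock layers, and one must exploit either the traveling-wave scaling or a uniform $L^2$ bound on $\eps\,w_x$ to justify its disappearance distributionally; everything else is a straightforward algebraic manipulation.
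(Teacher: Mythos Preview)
Your derivation is correct and is exactly the canonical energy-method computation one expects here: multiply the two equations of \eqref{555} by $\sigma(w)$ and $v$ respectively, add, rewrite $v\,v_{xx}$ and $-v\,w_{xxx}$ via the product rule, and absorb the leftover $-\alpha\eps^2\,v_{xx}w_x$ into the time derivative using the kinematic relation $w_{xt}=v_{xx}$. The paper states this lemma without an explicit proof (it is treated as a direct calculation), and your argument is precisely the intended one; your additional discussion of the formal limit is consistent with the framework of Definition~\ref{fff} and, if anything, is more detailed than the paper's own ``formally obtains''.
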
 

 
\subsection{Nonlinear phase transition model}
 
More generally, assume now an {\sl internal energy} function 
$e = e(w, w_x)$, and 
let us derive the field equations from the action
$$
J(y) 
: = \int_0^T \int_\Omega \Big(e(w, w_x) 
               - {v^2 \over 2} \Big) \, dx dt. 
$$
Precisely, considering the unknown functions $v$ and $w$ 
and defining the {\sl total stress} as  
$$
\Sigma(w,w_x, w_{xx}) 
:= { \del e \over \del w }(w,w_x) - \Big( { \del e \over \del w_x }(w,w_x) \Big)_x,   
$$ 
we can obtain 
$$ 
\aligned 
& v_t - \Sigma(w,w_x, w_{xx})_x = 0, \\ 
& w_t  - v_x = 0. 
\endaligned 
$$ 
Including next a nonlinear viscosity $\mu=\mu(w)$, 
we arrive at the nonlinear phase transition model which includes viscosity and capillarity 
effects: 
$$ 
\aligned 
& w_t - v_x = 0,  
\\
& v_t - \Sigma(w,w_x, w_{xx})_x = \bigl(\mu(w) \, v_x \bigr)_x.
\endaligned  
$$  

Again, the total energy 
$E(w,v,w_x) := e(w,w_x) + v^2 /2$ 
plays the role of a mathematical entropy, and we find
$$
E(w,v,w_x)_t - \bigl(\Sigma(w,w_x, w_{xx}) \, v\bigr)_x 
=  \Big( v_x \, { \del e \over \del w_x }(w,w_x) \Big)_x 
  + \bigl( \mu(w) \, v \, v_x \bigr)_x - \mu(w) \, v_x^2,
$$  
and once more, a single entropy inequality is obtained.

We now specialize this discussion with the important case that $e$ is quadratic in $w_x$. 
(Linear term should not appear because of the natural invariance 
of the energy via the transformation $x \mapsto -x$.)  
Setting, for some positive capillarity coefficient $\lam(w)$, 
$$
e(w,w_x) = \eps(w) + \lam(w) \, {w_x^2 \over 2}, 
$$ 
the total stress decomposes as follows: 
$$ 
\Sigma(w,w_x, w_{xx}) =  \sigma(w) + \lam'(w) \, {w_x^2 \over 2} - (\lam(w) \, w_x )_x, 
\quad 
\sigma(w) = \eps' (w),
$$ 
and the field equations take the form  
\be
\label{666}
\aligned 
& w_t  - v_x = 0,
\\
& v_t - \sigma(w)_x  
 = \Big( \lam'(w) \, {w_x^2 \over 2} - \bigl(\lam(w) \, w_x \bigr)_x \Big)_x 
   +  \bigl(\mu(w) \, v_x \bigr)_x. 
\endaligned 
\ee

\begin{lemma}[Nonlinear phase transition model]
For the augmented model \eqref{666}, one finds
$$
\aligned 
& \Big(\eps(w) + {v^2 \over 2} + \lam(w) \, {w_x^2 \over 2}\Big)_t 
  - \bigl(\sigma(w) \, v \bigr)_x 
\\
& =        \bigl( \mu(w) \, v \, v_x \bigr)_x  - \mu(w) \, v_x^2
		 + \Big(v \, {\lam'(w)\over 2} \, w_x^2  
                - v \, \bigl(\lam(w) \, w_x \bigr)_x 
                + v_x \, \lam(w)\, w_x \Big)_x,
\endaligned 
$$  
which leads to the same {\rm entropy inequality for the nonlinear phase transition model}
as \eqref{299} in Lemma~\ref{3331}. 
\end{lemma}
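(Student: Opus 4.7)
The approach is a direct pointwise computation. I would differentiate the modified total energy $E := \eps(w) + \frac{v^2}{2} + \lam(w) \frac{w_x^2}{2}$ term by term in $t$, substitute for $w_t$ and $v_t$ from the evolution equations~\eqref{666}, and rearrange the resulting products of derivatives into the sum of a spatial divergence and the non-positive dissipation $-\mu(w) v_x^2$. Using $\eps'(w) = \sigma(w)$ and $w_t = v_x$ one has $\eps(w)_t = \sigma(w) v_x$; the second equation of~\eqref{666} gives
\[
\bigl(v^2/2\bigr)_t = v \, \sigma(w)_x + v \, \bigl(\lam'(w) w_x^2/2 - (\lam(w) w_x)_x\bigr)_x + v \, (\mu(w) v_x)_x;
\]
and using $w_{xt} = v_{xx}$,
\[
\bigl(\lam(w) w_x^2/2\bigr)_t = \lam'(w) \, v_x \, w_x^2/2 + \lam(w) \, w_x \, v_{xx}.
\]

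Adding these three contributions and repeatedly applying the Leibniz rule should produce the claimed identity. The transparent cancellations are $\sigma(w) v_x + v \, \sigma(w)_x = (\sigma(w) v)_x$ and, for the viscous piece, $v \, (\mu(w) v_x)_x = (\mu(w) v v_x)_x - \mu(w) v_x^2$, which extracts the required dissipation. The delicate step is the capillarity rearrangement: writing $v A_x = (vA)_x - v_x A$ with $A := \lam'(w) w_x^2/2 - (\lam(w) w_x)_x$ yields a residual $- v_x \, \lam'(w) w_x^2/2 + v_x \, (\lam(w) w_x)_x$; the first piece cancels the $\lam'(w) \, v_x \, w_x^2/2$ coming from $\bigl(\lam(w) w_x^2/2\bigr)_t$, while the identity $v_x (\lam(w) w_x)_x + \lam(w) w_x v_{xx} = (v_x \, \lam(w) w_x)_x$ pairs the remaining pieces into a single total $x$-derivative, which combined with the capillarity divergence $(v A)_x$ reproduces the flux $\bigl(v \lam'(w) w_x^2/2 - v (\lam(w) w_x)_x + v_x \lam(w) w_x \bigr)_x$ of the claimed identity.

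The main obstacle is this capillarity bookkeeping, since several cubic-in-derivative terms must be correctly paired and no single one is conservative on its own; the rest of the calculation is purely mechanical and mirrors the proof of Lemma~\ref{3331}. Once the pointwise identity is established, the entropy inequality follows immediately: its right-hand side is the sum of a spatial divergence and the pointwise non-positive term $-\mu(w) v_x^2$, so testing against any non-negative compactly supported test function and passing to the zero-viscosity/zero-capillarity limit (under the scaling of small $\mu$ and $\lam$ implicit in~\eqref{IN.augmented}) makes the divergence terms vanish and preserves the sign, yielding exactly the entropy inequality~\eqref{299} of Lemma~\ref{3331}.
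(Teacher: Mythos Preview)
Your computation is correct and complete: the term-by-term differentiation, the Leibniz rearrangements for the viscous and stress pieces, and the capillarity bookkeeping via $vA_x=(vA)_x-v_xA$ together with $v_x(\lam(w)w_x)_x+\lam(w)w_x v_{xx}=(v_x\lam(w)w_x)_x$ all check out and reproduce the claimed identity exactly.

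The paper does not give an explicit proof of this lemma; rather, just before the lemma it records the \emph{general} energy identity
\[
E(w,v,w_x)_t-\bigl(\Sigma(w,w_x,w_{xx})\,v\bigr)_x=\Bigl(v_x\,\frac{\partial e}{\partial w_x}\Bigr)_x+\bigl(\mu(w)vv_x\bigr)_x-\mu(w)v_x^2
\]
for an arbitrary internal energy $e(w,w_x)$ and total stress $\Sigma=\partial e/\partial w-(\partial e/\partial w_x)_x$, and then specializes to $e=\eps(w)+\lam(w)w_x^2/2$, for which $\partial e/\partial w_x=\lam(w)w_x$ and $\Sigma=\sigma(w)+\lam'(w)w_x^2/2-(\lam(w)w_x)_x$. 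Substituting these into the general identity immediately yields the lemma. Your direct computation for the specialized model \eqref{666} is the same calculation carried out in concrete coordinates rather than via the general total-stress formulation; the two routes are equivalent, with the paper's organization having the minor advantage that the structure of the flux terms (one from $\Sigma v$, one from $v_x\,\partial e/\partial w_x$) is visible a priori, whereas in your approach that grouping emerges only after the cancellations.
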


When the viscosity and capillarity are taken to be constants, we recover the example in Section~\ref{PT} above.  
The entropy inequality is identical for both regularizations.


\subsection{Magnetohydrodynamic model} 

Consider next the following simplified version of the equations of ideal magnetohydrodynamics 
\be
\label{834}
\begin{aligned}
{v_t + \big( (v^2 + w^2) \, v\big)_x} & = \eps \, v_{xx} + \alpha \, \epsilon \, w_{xx}, 
\\
{w_t + \big( (v^2 + w^2) \,Êw \big)_x} & = \eps \, w_{xx} - \alpha \, \epsilon \, v_{xx}, 
\end{aligned}
\ee
where $v,w$ denote the transverse components of the magnetic field, 
$\eps$ the magnetic resistivity, and $\alpha$ the so-called Hall parameter. 
The Hall effect taken into account in this model 
is relevant to investigate, for instance, the Earth's solar wind.

When $\alpha=0$, \eqref{834} was studied by Brio and Hunter \cite{BrioHunter},
Freist\"uhler et al. \cite{Frei,FP1,FP2}, Panov \cite{Panov-systems}, and others. 
The equations are not strictly hyperbolic, and for certain initial data 
the Riemann problem may admit up to two solutions.

When $\alpha \neq 0$, we refer to LeFloch and Mishra \cite{LeFlochMishra} who
demonstrated numerically that a kinetic function can be associated to this model. For the purpose 
of the present section, we observe the following. 

\begin{lemma}[Magnetohydrodynamic model]
For the augmented model \eqref{834} one has 
$$
\aligned
{(1/2) \, \big(v_\eps^2 + w_\eps^2\big)_t + (3/4) \, \big((v_\eps^2 + w_\eps^2)^2\big)_x 
=}& - \eps \, \big( (v^\eps_x)^2 + (w^\eps_x)^2 \big) + \eps \, C^\eps_x,  
\endaligned
$$ 
so that in the limit $\eps \to 0$ the following {\rm entropy inequality associated with 
 the magnetohydrodynamic model}
holds 
$$
(1/2) \, \big(v^2 + w^2\big)_t + (3/4) \, \big((v^2 + w^2)^2\big)_x \leq 0. 
$$
\end{lemma}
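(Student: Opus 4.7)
The plan is to derive the stated identity by the standard energy/entropy manipulation: multiply the first equation of \eqref{834} by $v^\eps$ and the second by $w^\eps$, then add. On the left, the time derivatives combine into $(1/2)(v_\eps^2 + w_\eps^2)_t$, while the convective flux contribution becomes, writing $r^2 := v_\eps^2 + w_\eps^2$,
\[
v^\eps \, \bigl(r^2 \, v^\eps \bigr)_x + w^\eps \, \bigl(r^2 \, w^\eps\bigr)_x
= r^2 \, (r^2)_x + r^2 \, (v^\eps v^\eps_x + w^\eps w^\eps_x)
= \tfrac{3}{2} \, r^2 \, (r^2)_x
= \tfrac{3}{4} \, \bigl( (v_\eps^2 + w_\eps^2)^2\bigr)_x,
\]
which yields the desired left-hand side.

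Next I would treat the diffusive and dispersive contributions on the right separately. For the diffusion, the identity $v^\eps v^\eps_{xx} + w^\eps w^\eps_{xx} = \bigl( \tfrac{1}{2}(v_\eps^2 + w_\eps^2)_x \bigr)_x - \bigl((v^\eps_x)^2 + (w^\eps_x)^2\bigr)$ splits the contribution into a conservative part and the sign-definite dissipation term $-\eps\bigl((v^\eps_x)^2 + (w^\eps_x)^2\bigr) \leq 0$. The key observation, and really the only nontrivial step, concerns the Hall terms proportional to $\alpha$: these combine into $\alpha\eps \bigl(v^\eps w^\eps_{xx} - w^\eps v^\eps_{xx}\bigr)$, and a direct computation shows
\[
v^\eps \, w^\eps_{xx} - w^\eps \, v^\eps_{xx} = \bigl( v^\eps \, w^\eps_x - w^\eps \, v^\eps_x\bigr)_x,
\]
so the dispersive contribution is a pure $x$-derivative. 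This cancellation, which reflects the skew-symmetric structure of the Hall coupling in \eqref{834}, is the crucial feature that makes a single entropy inequality available despite the presence of $\alpha$.

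Collecting these computations produces the claimed identity with
\[
C^\eps := \tfrac{1}{2} \, (v_\eps^2 + w_\eps^2)_x + \alpha \, \bigl(v^\eps \, w^\eps_x - w^\eps \, v^\eps_x\bigr).
\]
Finally, assuming that $(v^\eps, w^\eps)$ remain uniformly bounded and converge in a suitable sense, one passes to the limit $\eps \to 0$: the dissipative term has a sign and hence its limit is a non-positive measure, while $\eps \, C^\eps_x$ tends to zero in the sense of distributions by virtue of the $\eps$ prefactor. This produces the asserted entropy inequality for the limit $(v,w)$. The main (and essentially only) obstacle is spotting the exact-derivative structure of the Hall term; everything else is routine energy-method algebra in the spirit of the preceding lemmas.
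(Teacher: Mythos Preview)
Your proof is correct and follows exactly the approach the paper has in mind: the paper does not spell out a proof for this lemma, but the computation you give is the natural one, in the same spirit as the other entropy identities in Section~\ref{MOD}. Your identification of $C^\eps = \tfrac{1}{2}(v_\eps^2+w_\eps^2)_x + \alpha\,(v^\eps w^\eps_x - w^\eps v^\eps_x)$ and the observation that the skew-symmetric Hall coupling produces a pure $x$-derivative are precisely the points the paper leaves implicit.
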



\subsection{Other physical models}
\label{OM}

The Buckley-Leverett equation for two-phase flows in porous media provides another example, studied 
in Hayes and Shearer \cite{HayesShearer} and Van~Duijn, Peletier, and Pop \cite{DuijnPeletierPop}. 
Finally, 
we list here several other models of physical interest which, however, have not yet received 
as much attention as the models presented so far. Since the hyperbolic flux part of these models
admits an inflection point and, moreover, physical modeling includes  
dispersive-type terms, 
it is expected that compressive and undercompressive shocks occur in weak solutions, at least in certain regimes
of applications. 
The actual occurrence of nonclassical shocks depends upon the form of the regularization,  
and further investigations are necessary about
the quantum hydrodynamics models \cite{Marcati, Jerome}, 
phase field models \cite{Caginalp, Ratz}, Suliciu-type models \cite{Carboux,Frid,Suliciu}, 
non-local models involving fractional integrals \cite{KLR,Rohde1,Rohde2},   
and discrete molecular models based, for instance,
on potentials of the Lennard-Jones type \cite{Bohm,DreyerH, NT, TV, Weinan}.


\section{Nonclassical Riemann solver} 

\subsection{Consequences of a single entropy inequality} 

For simplicity in the presentation, we consider a scalar equation  
\be
\label{127}
u_t + f(u)_x = 0, \qquad u=u(t,x)
\ee
with concave-convex flux
$$
\aligned 
& u \, f''(u) > 0 \quad  \text{for} \ u\neq 0, 
\\
& f'''(0) \neq 0, 
\qquad
\lim_{u \to \pm \infty} f'(u) = + \infty. 
\endaligned  
$$ 
To the flux $f$, we associate the {\sl tangent function} $\vn: \RR \to \RR$ 
(and its inverse denoted by $\varphi^{-\natural}$) defined by 
$$
f'(\vn(u)) = {f(u) - f(\vn(u)) \over u -\vn(u)}, \qquad  u \neq 0. 
$$

Weak solutions  $u \in L^\infty$ to \eqref{127}, by definition, satisfy 
$$
\iint \Big( u \, \varphi_t + f(u) \, \varphi_x \big) \, dxdt = 0 
$$
for every smooth, compactly supported function $\varphi$. 
If $u$ is a function with locally bounded variation, 
then $u_t$ and $u_x$ are locally bounded measures and \eqref{127} holds as an equality between measures. 
A shock wave $(u_-,u_+) \in \RR^2$, given by  
$$
u(t,x) = \begin{cases} 
u_-, \quad & x < \lambda \, t, 
\\
u_+, \quad & x > \lambda \, t,
\end{cases}
$$
 is a weak solution to \eqref{127} provided the Rankine-Hugoniot relation  
$$
- \lambda \, (u_+ - u_-) + f(u_+) - f(u_-) = 0
$$
holds. 
In the scalar case, this relation determines the shock speed uniquely: 
$$
\lambda = {f(u_-) - f(u_+) \over u_- -u_+} = :\ab(u_-,u_+). 
$$

Motivated by the physical models studied in the previous section, we
 impose that weak solutions satisfy a {\sl single entropy inequality}  
$$
U(u)_t + F(u)_x \leq 0, 
\qquad \qquad  U'' >0, \qquad F'(u) := f'(u) \, U'(u)
$$
for {\sl one} prescribed entropy pair. In other words, on the discontinuity $(u_-, u_+)$ we impose 
$$
\aligned
E(u_-,u_+) 
& := - {f(u_-) - f(u_+) \over u_- -u_+} \, \big( U(u_+) - U(u_-) \big) + F(u_+) - F(u_-)
\\
& \leq 0. 
\endaligned 
$$ 
It is easily checked that  
$$ 
E(u_-, u_+) = -\int_{u_-}^{u_+} U''(v) \, (v - u_-) \, 
    \Bigg({f(v) - f(u_-) \over v - u_-} - {f(u_+) - f(u_-) \over u_+ - u_-} 
    \Bigg) \, dv. 
$$

Note in passing that 
imposing all of the entropy inequalities
would lead to the {\sl Oleinik's entropy inequalities} 
\be
\label{129}
{f(v) - f(u_+) \over v - u_+} \leq {f(u_+) - f(u_-) \over u_+ - u_-} 
\ee
for all $v$ between $u_-$ and $u_+$. Imposing a single entropy inequality is much weaker than imposing \eqref{129}.

\begin{proposition}[Zero entropy dissipation function] 
Given a concave flux $f$ and a strictly convex entropy $U$, there exists a function
$\vfz:\RR\mapsto\RR$ such that 
\be
\label{ZER}
\aligned 
& E\big( u,\vfz(u) \big) = 0,           && \vfz(u)\neq u \quad (\text{ when } u\neq 0)
\\
& (\vfz \circ \vfz)(u) = u.
\endaligned 
\ee
Moreover, for instance when $u>0$, the entropy dissipation $E\big( u,\vfz(u) \big)$ is negative if and only if $u \in \big( \vfz(u), u \big)$. 
\end{proposition}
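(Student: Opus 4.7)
Multiplying the definition of $E$ through by $(u_+-u_-)$, one obtains
\begin{equation*}
E(u_-,u_+)\,(u_+-u_-) \;=\; G(u_-,u_+) \;:=\; \bigl(F(u_+)-F(u_-)\bigr)(u_+-u_-) - \bigl(f(u_+)-f(u_-)\bigr)\bigl(U(u_+)-U(u_-)\bigr),
\end{equation*}
where $G$ is visibly symmetric in its two arguments. Nontrivial zeros of $v\mapsto E(u,v)$ therefore coincide with the nontrivial zeros of the symmetric object $v\mapsto G(u,v)$; once existence and uniqueness of such a zero $\vfz(u)$ are established, the involution $\vfz\circ\vfz=\mathrm{id}$ follows at once, since $G(u,\vfz(u))=0$ forces $G(\vfz(u),u)=0$ by symmetry and uniqueness then gives $\vfz(\vfz(u))=u$.

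The core of the argument is the analysis of $\Psi(v):=E(u,v)$ for fixed $u>0$. A short computation using $F'=f'U'$ yields the factorization
\begin{equation*}
\Psi'(v) \;=\; \bigl(f'(v)-\ab(u,v)\bigr)\,\bigl(U'(v) - \overline{U}(u,v)\bigr), \qquad \overline{U}(u,v):=\frac{U(v)-U(u)}{v-u}.
\end{equation*}
By strict convexity of $U$, the second factor has the sign of $v-u$ and vanishes only at $v=u$. For the first factor, set $\beta(v):=f'(v)(v-u)-(f(v)-f(u))$, so that $f'(v)-\ab(u,v)=\beta(v)/(v-u)$ and $\beta'(v)=f''(v)(v-u)$. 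The concave-convex structure then forces $\beta$ to be increasing on $(-\infty,0)$, decreasing on $(0,u)$, and increasing on $(u,+\infty)$, with $\beta(u)=0<\beta(0)$ and $\beta(v)\to-\infty$ as $v\to-\infty$ (from $f'(-\infty)=+\infty$). Hence $\beta$ admits a unique zero $\vn(u)\in(-\infty,0)$, the tangent point introduced earlier. Multiplying signs case by case, $\Psi'>0$ on $(\vn(u),+\infty)\setminus\{u\}$ and $\Psi'<0$ on $(-\infty,\vn(u))$.

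It follows that $\Psi$ attains a global minimum at $\vn(u)$. A Taylor expansion of $G$ at $v=u$ (using $F'=f'U'$) shows $G(u,u+h)=\frac{f''(u)U''(u)}{12}\,h^4+O(h^5)$, so $\Psi$ has a zero of order exactly three at $v=u$ with $\Psi'''(u)>0$; combined with strict monotonicity on $(\vn(u),u)$ this gives $\Psi(\vn(u))<\Psi(u)=0$. The superlinear growth $f'(-\infty)=+\infty$ makes $\Psi'$ non-integrable near $-\infty$, hence $\Psi(v)\to+\infty$ as $v\to-\infty$. The intermediate value theorem then produces a unique zero $\vfz(u)\in(-\infty,\vn(u))$, and the monotonicity of $\Psi$ on $(\vfz(u),u)$ together with $\Psi(\vfz(u))=\Psi(u)=0$ yields the sign statement: $\Psi(v)<0$ iff $v\in(\vfz(u),u)$. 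The case $u<0$ is handled by an entirely parallel argument. The main obstacle is the sign analysis of $\beta$ in the previous paragraph: one must exploit simultaneously the change of concavity of $f$ at $0$ and the growth condition $f'(\pm\infty)=+\infty$ to pin down the unique tangent point $\vn(u)$, which in turn underpins the entire monotonicity picture for $\Psi$ from which existence, uniqueness and the sign claim all flow.
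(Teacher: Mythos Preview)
Your proof is correct and follows essentially the same approach as the paper: your factorization $\Psi'(v) = (f'(v) - \ab(u,v))(U'(v) - \overline{U}(u,v))$ is algebraically equivalent to the paper's identity $\partial_{u_+} E(u_-,u_+) = b(u_-,u_+)\,\partial_{u_+}\ab(u_-,u_+)$, since $U'(v)-\overline{U}(u,v) = b(u,v)/(v-u)$ and $f'(v)-\ab(u,v) = (v-u)\,\partial_v\ab(u,v)$. You supply considerably more detail than the paper's sketch (which only records this identity and remarks that the sign of $\partial_{u_+}\ab$ is ``easily determined''), and your symmetry argument via $G(u_-,u_+)=E(u_-,u_+)(u_+-u_-)$ for the involution $\vfz\circ\vfz=\mathrm{id}$ is a clean addition not made explicit in the paper.
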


The above result follows from the identity (for $u_- \neq u_+$): 
$$
\aligned 
& \del_{u_+} E(u_-,u_+) = b(u_-,u_+) \, \del_{u_+} \ab(u_-,u_+), 
\\ 
& b(u_-,u_+) := U(u_-) - U(u_+) - U'(u_+) \, (u_-- u_+) >0, 
\endaligned 
$$
where the sign of the factor 
$$
\del_{u_+} \ab (u_-,u_+) = {f'(u_+) - \ab(u_-,u_+) \over u_+ - u_-}
$$
is easily determined in view of the concave-convex shape of $f$. 
 

\subsection{Admissible waves} 

We are in a position to deal with the Riemann problem, corresponding to the initial data 
$$
u(x,0) = \begin{cases}
u_l , & x < 0, 
\\
u_r, & x > 0, 
\end{cases}
$$ 
with $(u_l, u_r )\in \RR^2$. It turns out that a single entropy inequality allows for three types of waves
$(u_-, u_+)$: 
\begin{itemize}

\item {\sl Classical compressive shocks,} having 
$$
u_- >0, \qquad \vn(u_-) \leq u_+ \leq u_-, 
$$ 
which satisfy {\sl Lax shock inequalities}
$$ 
f'(u_-) \geq {f(u_+) - f(u_-) \over u_+ - u_-} \geq f'(u_+).
$$
Compressive shocks arise from smooth initial data: for instance, using the method of characteristics
one can write 
$$
u(t,x) = u\big(0, x- t \, f'(u(t,x)) \big),  
$$
and one sees that the implicit function theorem may  fail to determine the value $u(t,x)$ uniquely, 
whenever $t$ is sufficiently large. 
Compressive shocks arise also from singular limits,
 for instance from vanishing viscosity limits.

\item {\sl Nonclassical undercompressive shocks,} having 
$$
u_- >0, \qquad \vfz(u_-) \leq u_+ \leq \vn(u_-), 
$$  
for which all characteristics pass through it: 
$$
\min \Big( f'(u_-), f'(u_+) \Big) \geq {f(u_+) - f(u_-) \over u_+ - u_-}.
$$
The cord connecting $u_-$ to $u_+$ intersects the graph of $f$.  
Undercompressive shocks arise from certain (dispersive) singular limits, only. 
\eqref{cons21}.

\item {\sl Rarefaction waves,}
 which are Lipschitz continuous solutions $u$ depending only upon $\xi:=x/t$
and satisfy the ordinary differential equation  
$$
- \xi \, u(\xi)_\xi + f(u(\xi))_\xi = 0. 
$$
Precisely, a rarefaction consists of two constant states separated by a self-similar solution: 
$$
u(t,x) 
=
\begin{cases}
u_-,              &   x < t \, f'(u_-), 
\\
(f')^{-1} (x/t),  & t \, f'(u_-)  < x < t \, f'(u_-), 
\\
u_+,        & x > t \, f'(u_+). 
\end{cases}
$$
This construction makes sense 
provided 
$f'(u_-) < f'(u_+)$
and $f'$ is strictly monotone on the interval limited by $u_-$ and $u_+$.

\end{itemize}

By attempting to build a solution to the Riemann problem that uses only the above 
admissible waves, one realizes 
that the Riemann problem may admit a one-parameter family of solutions
satisfying a single entropy inequality.
Indeed, within an open range of initial data, one can combine together an arbitrary nonclassical shock plus a classical 
shock. 


\subsection{Entropy-compatible kinetic functions} 
 
At this stage of the discussion we introduce an additional admissibility requirement. 

\begin{definition}
A  {\rm kinetic function}
is a monotone decreasing, Lipschitz continuous function $\vf:\RR\mapsto\RR$ satisfying 
\be
\label{789}
\vfz(u) <  \vf(u) \leq \vn(u),  \quad u > 0.
\ee
The {\rm kinetic relation} 
$$
u_+ = \vf(u_-)
$$
then singles out one nonclassical shock for each left-hand state $u_-$. 
\end{definition}

Equivalently, one could prescribe the entropy dissipation rate across undercompressive shocks. 
The following two extremal choices could be considered:

\begin{itemize}

\item In the case $\vf = \vn$, the kinetic relation selects classical entropy solutions only, 
which in fact satisfy {\sl all} convex entropy inequalities. 

\item The choice $\vf = \vfz$ is not quite allowed in \eqref{789}
and would correspond to selecting {\sl dissipation-free shocks,} satisfying an {\sl entropy equality.}  

\end{itemize}

The property $ (\vfz \circ \vfz)(u) = u$ (see \eqref{ZER}) implies the {\sl contraction property}  
$$
|\vf\big( \vf(u)\bigr)| < |u|,   \quad u \neq 0. 
$$
which turns out to prevent oscillations with arbitrary large frequencies in solutions to the initial 
value problem. Introduce the companion threshold function $\vs:\RR \to \RR$
associated with $\phif$, defined by 
$$
{f(u) - f\big( \phif(u) \big) \over u - \phif(u)} = {f(u) - f\big( \vs(u) \big) \over u -\vs(u)}, \qquad  u \neq 0. 
$$

\begin{definition} 
The {\rm nonclassical Riemann solver} associates to any Riemann data $u_l, u_r$
the following entropy solution (for $u_l >0$, say): 

\begin{itemize}

\item a rarefaction wave if $u_r \geq u_l$, 

\item a classical shock if $u_r \in \big[ \vs(u_l), u_l \big)$,

\item if $u_r \in \big( \vf(u_l), \vs(u_l) \big)$, a nonclassical shock $\big( u_l, \vf(u_l)\big)$ 
followed by a classical shock $\big( \vf(u_l), u_r\big)$, and

\item if $u_r \leq \vf(u_l)$, a nonclassical shock $\big( u_l, \vf(u_l)\big)$
followed by a rarefaction wave $\big( \vf(u_l), u_r\big)$.

\end{itemize} 
\end{definition}

In conclusion, given a kinetic function $\vf$ compatible with an entropy, 
the Riemann problem admits a unique solution satisfying 
the hyperbolic conservation law, the Riemann initial data, 
the single entropy inequality, and a kinetic relation  $u_+ = \vf(u_-)$. 

Observe the $L^1$ continuous dependence property satisfied by any two entropy solutions $u,v$ 
associated with the same kinetic function, 
$$
\| u (t) - v(t) \|_{L^1(K)} \leq C(T, K) \, \| u (0) - v(0) \|_{L^1(K)}
$$
for all $t \in [0,T]$ and all compact set $K \subset \RR$. 
However, no pointwise version of this continuous dependence property holds, 
as the Riemann solution contain ``spikes'':
 some intermediate states depend discontinuously upon the initial data.


\subsection{Generalization to systems}

Nonclassical Riemann solvers are also known for several systems of interest. 
The $2 \times 2$ isentropic Euler,  nonlinear elasticity, and phase transition systems
were studied extensively in the mathematical literature.  
The nonclassical Riemann solver was constructed by Shearer et al. \cite{SchulzeShearer,ShearerYang}
(for the cubic equation) and LeFloch and Thanh \cite{LeFlochThanh3,LeFlochThanh1,LeFlochThanh2}
(for general constitutive equations admitting one inflection point). 
Uniqueness is also known for the Riemann problem
(within the class of piecewise smooth solutions)
when this system is strictly hyperbolic.
However, in the hyperbolic-elliptic regime,
two solutions are still available after imposing the 
kinetic relation. About the qualitative properties of solutions, 
see also important contributions by Hattori \cite{Hattori1,Hattori2,Hattori3},
Mercier and Piccoli \cite{MP1,MP2}, and Corli and Tougeron \cite{CST3}. 
Partial results are also available for 
the $3 \times 3$ Euler equations for van der Waals fluids \cite{LeFlochThanh-Waals}. 
Finally, 
for the construction of the nonclassical Riemann solver to general 
strictly hyperbolic systems of $N \geq 1$ conservation laws, 
we refer to Hayes and LeFloch \cite{HayesLeFloch-systems}.


\section{Kinetic relations associated with traveling waves}
 
\subsection{Traveling wave problem}
 
It was explained in the previous section that kinetic functions characterize the dynamics of nonclassical shocks
and allow one to solve the Riemann problem. 
The actual derivation of a kinetic relation is an essential issue and, in the present section, 
we explain how to derive it 
effectively from an analysis of traveling wave solutions to a given augmented model.
 In some cases, the kinetic function is computable by (explicit or implicit) analytic formulas.

For simplicity in the presentation, we consider the augmented model 
\be
\label{augm}
u_t + f(u)_x = \alpha \, \bigl( |u_x|^p \, u_x\bigr)_x + u_{xxx} 
\ee
in which $f$ is assumed to be a concave-convex function while
$\alpha>0$ and $p \geq 0$ are prescribed parameters. 

Searching for solutions $u(t,x) = w(y)$ depending only on the variable $y = x - \lam \, t$, we arrive at 
the second-order ordinary differential equation  
\be
\label{augm1}
- \lam \, (w - u_-)  + f(w) - f(u_-) = \alpha \, |w'|^p \, w' + w'' 
\ee
with boundary conditions
$$
\lim_{y \to \pm \infty} w(y) = u_\pm. 
$$
Here, $u_\pm$ and $\lam$ are constant states satisfying the Rankine-Hugoniot relation.

\begin{remark} 1. The parameter $\eps$ has been removed by rescaling of the original augmented equation. 

2. All of the results in this section extend to the more general model 
$$
u_t + f(u)_x  = \alpha \, \bigl( b(u,u_x) \, |u_x|^p \, u_x \bigr)_x + \bigl(c_1(u) \, \big(c_2(u) \, u_x\big)_x\bigr)_x, 
$$  
where the functions $b(u,v), c_1(u), c_2(u)$ are continuous and positive, 
and $b(u, v)\, |v|^p v$ is monotone increasing in $v$.  
\end{remark}

In view of \eqref{augm1}, the fundamental questions of interest are the following ones: 
do there exist traveling wave solutions associated with classical and/or with 
nonclassical shock waves $(u_-, u_+)$~? 
Can one associate a kinetic function $\varphi^\flat_{\alpha,p}$ to this model~?
If so, is this kinetic function monotone ? What is the local behavior of $\vf$ at $u=0$~? 
How does $\varphi^\flat_{\alpha,p}$ depend upon the two parameters $\alpha,p$~?

Answers to these questions were obtained first for the cubic flux function, 
by deriving {\sl explicit} formulas for the kinetic function: cf.~for  
$p = 0$, Shearer et al. \cite{JMS} and, for $p = 1$, 
Hayes and LeFloch \cite{HayesLeFloch-scalar}. In the latter case, it was observed that 
$\vfpalpha_{\alpha,1}'(0) = {\varphi^\flat_0}'(0)= -1$. 
General flux-functions and general regularization were covered
 by Bedjaoui and LeFloch in the series of papers \cite{BedjaouiLeFloch1}--\cite{BedjaouiLeFloch5}.


\subsection{Existence and asymptotic properties of kinetic functions}

To show the existence of the kinetic function, we reformulate \eqref{augm1} as a first-order system in the plane
$(w,w')$. The left-hand state and the speed being fixed, 
the corresponding equilibria are the solutions $u_1 < u_2 < u_3 = u_-$ to 
$$
- \lam \, (w - u_-)  + f(w) - f(u_-) = 0,
$$
which admits two non-trivial solutions (beyond $u_-$). 
Equilibria may be saddle points (two real eigenvalues with opposite signs) or 
nodes (two eigenvalues with same sign). A phase plane analysis shows that
there exist saddle-node connections from $u_-$ to $u_2$ (corresponding to classical shocks) 
as well as saddle-saddle connections from $u_-$ to $u_1$ (corresponding to nonclassical shocks). 
To state the results precisely, we introduce the following set of all admissible shocks
$$
S(u_-) := \bigl\{ u_+ \, / \, \text{ there exists a TW connecting $u_\pm$ }\bigr\}
$$ 
The following theorem is established in Bedjaoui and LeFloch \cite{BedjaouiLeFloch4}, and shows that  
to the augmented model one can associate a unique kinetic function which, furthermore,
is monotone and 
satisfies all the assumptions required in the theory of the Riemann problem.

\begin{theorem}[Existence of the kinetic function]
\label{KKK}
For each $\alpha >0$ and $p \geq 0$, 
consider the traveling wave problem \eqref{augm1} for the augmented model \eqref{augm}. 
Then, there exists a kinetic function $\varphi^\flat_{\alpha,p}: \RR \to \RR$ which is 
locally Lipschitz continuous, 
strictly decreasing, and such that, for instance when $u >0$,  
$$
\aligned 
& S(u) = \bigl\{\varphi^\flat_{\alpha,p}(u)\bigr\} \cup \bigl(\varphi^\sharp_{\alpha,p}(u), u\bigr], 
\\
& \vfz(u) < \varphi^\flat_{\alpha,p}(u) \leq \vn(u), 
\endaligned 
$$
where $\varphi^\flat_{\alpha,p}$ is the companion function associated with $\varphi^\flat_{\alpha,p}$. 
Moreover, for $0 \leq p \leq 1/3$, 
there exists a threshold function $A^\natural_p$ satisfying 
$$ 
 {\anp: \RR \to [0, \infty)} \text{ Lipschitz continuous, } \quad \anp(0) = 0,  
$$
so that 
$$
 \vfalpha(u) = \vn(u) \quad \text{ if and only if } \quad \alpha \geq \anp(u), 
$$
implying that all shocks of sufficiently small strength are classical.
On the other hand, for for all $p> 1/3$, one has 
$$
\vfalpha(u) \neq \vn(u) \qquad (u \neq 0),
$$
implying that there exist nonclassical shocks of arbitrarily small strength. 
\end{theorem}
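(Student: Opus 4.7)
The plan is to analyze the traveling wave equation \eqref{augm1} as a planar dynamical system
$$w' = v, \qquad v' = F(w) - \alpha |v|^p v, \qquad F(w) := -\lam(w-u_-) + f(w) - f(u_-),$$
combining invariant-manifold techniques with a shooting argument in the speed parameter. Under the concave-convex hypothesis on $f$ and for speeds $\lam$ in the Hugoniot regime of interest, $F$ has up to three zeros $u_1 < u_2 < u_3 = u_-$; linearizing shows that $u_-$ is always a saddle by the Lax condition $\lam < f'(u_-)$, the middle equilibrium $u_2$ is locally attracting in forward time (a sink or focus for $p=0$; a non-hyperbolic point rendered weakly attracting by the nonlinear damping when $p>0$), and $u_+$ is a saddle precisely in the undercompressive case $\lam < f'(u_+)$. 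The central monotone device is the Lyapunov identity
\begin{equation*}
\frac{d}{dy}\Bigl(\frac{v^2}{2}+V(w)\Bigr) = -\alpha |v|^{p+2}, \qquad V(w) := -\int_{u_-}^w F(s)\,ds,
\end{equation*}
which integrates along any nontrivial heteroclinic to $V(u_-) - V(u_+) = \alpha \int_{\RR} |w'|^{p+2}\,dy > 0$.

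For classical (Lax) targets the one-dimensional unstable manifold of the saddle $u_-$ is captured by the attracting equilibrium, producing the open branch $\bigl(\varphi^\sharp_{\alpha,p}(u), u\bigr]$ of $S(u)$ via standard invariant-manifold arguments and trapping-region estimates. For nonclassical saddle--saddle connections, which are codimension-one phenomena in the speed parameter, I would parametrize the Rankine--Hugoniot curve by $\lam$ and define a splitting function $d(\lam)$ measuring the signed separation between $W^u(u_-)$ and $W^s(u_+)$ along a transverse section; the dissipation identity above forces $d$ to vary monotonically in $\lam$ with opposite signs at the two ends of the admissible range, so the intermediate value theorem yields a unique zero $\lam^*(u)$ and defines $\varphi^\flat_{\alpha,p}(u) := u_+(\lam^*)$. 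Lipschitz continuity and strict monotonicity in $u$ then follow from the implicit function theorem applied to $d(\lam,u)$, transversality being provided by the strict positivity of the dissipation. The enclosure $\varphi^\flat_0(u) < \varphi^\flat_{\alpha,p}(u) \leq \varphi^\natural(u)$ reads off from $\varphi^\flat_0$ being the Hamiltonian $(\alpha = 0)$ limit (equality of the potentials at the endpoints) and from the degeneration of the saddle at $u_+$ when $u_+ = \varphi^\natural(u)$; the companion $\varphi^\sharp_{\alpha,p}$ is then obtained algebraically from $\varphi^\flat_{\alpha,p}$ via Rankine--Hugoniot.

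The threshold function $A^\natural_p$ is constructed via monotonicity in $\alpha$: increasing $\alpha$ strengthens the dissipation and pushes $\varphi^\flat_{\alpha,p}(u)$ upward toward $\varphi^\natural(u)$, so for each $u > 0$ the set $\{\alpha > 0 : \varphi^\flat_{\alpha,p}(u) = \varphi^\natural(u)\}$ is either empty or a closed half-line $[A^\natural_p(u), +\infty)$, with Lipschitz regularity of $A^\natural_p$ transferred from the Lipschitz dependence of the heteroclinic on $(\alpha, u)$.

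The main obstacle will be the sharp dichotomy at the critical exponent $p = 1/3$. My plan here is a careful matched asymptotic analysis of the rescaled traveling wave equation as $u \to 0$, using a two-scale ansatz that simultaneously balances the cubic source (coming from $f'''(0) \neq 0$), the inertial/dispersive term $w''$, and the nonlinear boundary layer generated by $|w'|^p w'$ near the inflection point of $f$. The critical exponent $p = 1/3$ emerges as the value at which the effective damping coefficient, in the appropriately rescaled ODE, is of unit order relative to the $O(u)$ perturbation from the zero-dissipation heteroclinic to the classical boundary $\varphi^\natural(u)$. For the subcritical range $0 \leq p \leq 1/3$ the effective damping is dominant at small $u$ for any fixed $\alpha$, forcing the perturbed unstable manifold of $u_-$ into the classical basin and yielding $\varphi^\flat_{\alpha,p}(u) = \varphi^\natural(u)$ once $\alpha \geq A^\natural_p(u)$, with $A^\natural_p(u) \to 0$ as $u \to 0$. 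For the supercritical range $p > 1/3$ the effective damping is subdominant, a perturbed Hamiltonian heteroclinic persists for every $\alpha > 0$, and $\varphi^\flat_{\alpha,p}(u)$ remains strictly below $\varphi^\natural(u)$. Making these matched expansions quantitative uniformly in $u$ and globally in $\alpha$, and excluding alternative saddle--saddle branches, constitutes the technical heart of the argument.
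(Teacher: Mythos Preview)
The paper is a review and does not present its own proof of this theorem; it only sketches the strategy in a few lines (reformulate \eqref{augm1} as a planar system in $(w,w')$, classify equilibria as saddles or nodes, and obtain classical shocks as saddle--node connections and nonclassical shocks as saddle--saddle connections) and then attributes the full result to the Bedjaoui--LeFloch series \cite{BedjaouiLeFloch1}--\cite{BedjaouiLeFloch5}. Your proposal is consistent with that sketch: the planar reformulation, the equilibrium classification, the Lyapunov/dissipation identity, the shooting in $\lambda$ for the saddle--saddle connection, and the rescaling argument to isolate the critical exponent $p=1/3$ are indeed the ingredients used in those references.

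Two points deserve care. First, your assertion that ``the dissipation identity above forces $d$ to vary monotonically in $\lambda$'' is the crux and is not an immediate consequence of the energy balance $V(u_-)-V(u_+)=\alpha\int_\RR |w'|^{p+2}\,dy$ alone; in the cited papers monotonicity of the shooting map is obtained by a comparison/ordering argument for the unstable manifold of $u_-$ as $\lambda$ varies (trajectories for different $\lambda$ cannot cross in the relevant region of the phase plane), and you should plan on proving this geometric monotonicity directly rather than trying to extract it from the integrated dissipation. Second, for $p>0$ the middle equilibrium $u_2$ is non-hyperbolic (the linearization is a center), so your phrase ``rendered weakly attracting by the nonlinear damping'' hides real work: establishing the basin of attraction and the trapping estimates needed for the classical branch $(\varphi^\sharp_{\alpha,p}(u),u]$ requires a genuine nonlinear argument (e.g.\ a Lyapunov function adapted to the degenerate damping), and this feeds into the $p=1/3$ dichotomy as well. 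Your matched-asymptotics plan for the threshold is on the right track, but the actual proofs in \cite{BedjaouiLeFloch1,BedjaouiLeFloch5} rely on an explicit self-similar rescaling $w(y)=u_-\,\tilde w(u_-^{\beta}y)$ that reduces the small-$u$ limit to a fixed normalized problem, which is sharper than a formal two-scale expansion.
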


The following local behavior is relevant in 
the existence theory for the initial value problem.

\begin{theorem} [Asymptotic properties of the kinetic function] 
Under the assumptions and notation of Theorem~\ref{KKK}, 
the behavior of infinitesimally small shocks is described as follows: 
\begin{itemize}

\item $p=0$:
$$
\aligned 
& \vfpalpha'(0)=\vn'(0)= -1/2,
\\
& \anp(0)=0, \qquad  \anp'(0\pm) \neq 0; 
\endaligned
$$
\item $0< p \leq 1/3$:
$$
\aligned
& \vfpalpha'(0) = -1/2, 
\\
& \anp(0)=0, \quad \anp'(0\pm) = + \infty; 
\endaligned
$$
\item $1/3 < p <1/2$:
$$
\vfpalpha'(0) = -1/2;
$$
\item $p=1/2$: 
$$ 
\vfpalpha'(0) \in \big( {\varphi^{-\flat}_0}'(0), -1/2\big)= (-1, -1/2), 
$$  
$$
\lim_{\alpha \to 0+} \vfpalpha'(0) = -1, 
\qquad \lim_{\alpha \to +\infty}\vfpalpha'(0) = -1/2; 
$$ 
\item $p>1/2$: 
$$
\vfpalpha'(0) = -1. 
$$
\end{itemize} 
\end{theorem}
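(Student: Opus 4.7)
The natural strategy is to zoom in on infinitesimally small shocks by a suitable rescaling of the traveling‐wave ODE~\eqref{augm1}, and then extract a limiting profile whose endpoints give $\vfpalpha'(0)$. Given a concave–convex flux with $f''(0)=0$ and $f'''(0)\neq 0$, I would write $f(u)=f(0)+f'(0)u+\tfrac{a}{6}u^{3}\cdot 6+o(u^{3})$ with $a=f'''(0)/6$, use the Rankine–Hugoniot expansion $\lambda=f'(0)+a(u_{+}^{2}+u_{+}u_{-}+u_{-}^{2})$, and introduce the ansatz $w(y)=u_{-}\,W(u_{-}y)$, $W_{\infty}:=u_{+}/u_{-}$. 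A direct substitution collapses~\eqref{augm1} (after dividing by $u_{-}^{3}$) to the one‐parameter family of rescaled equations
\be
\label{plan:rescaled}
W''+\alpha\,u_{-}^{\,2p-1}\,|W'|^{p}W'=a\,(W-1)(W-W_{\infty})(W+W_{\infty}+1),
\ee
on the line, with boundary values $W(-\infty)=1$, $W(+\infty)=W_{\infty}$. The critical exponent $p=\tfrac12$ appears as the unique value at which dispersion ($W''$), viscosity ($|W'|^{p}W'$) and the cubic source balance in the small‐shock limit.

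\textbf{Three regimes.} For $p>\tfrac12$ the factor $u_{-}^{2p-1}\to 0$, so~\eqref{plan:rescaled} degenerates to the conservative equation $W''=a(W-1)(W-W_{\infty})(W+W_{\infty}+1)$. Multiplying by $W'$ and integrating, the heteroclinic condition $V(1)=V(W_{\infty})$ reduces to a quartic identity which factors as $(W_{\infty}-1)^{3}(W_{\infty}+1)=0$; the only non‐trivial root is $W_{\infty}=-1$, yielding $\vfpalpha'(0)=-1$. For $p<\tfrac12$ the viscosity coefficient blows up, so the correct rescaling suppresses $W''$ and leaves the first‐order balance $\alpha|W'|^{p}W'=a(W-1)(W-W_{\infty})(W+W_{\infty}+1)$. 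A monotone orbit from $W=1$ to $W=W_{\infty}$ requires the third root $-1-W_{\infty}$ to coincide with $W_{\infty}$, forcing $W_{\infty}=-1/2=\vn'(0)$, hence $\vfpalpha'(0)=-1/2$. For $p=\tfrac12$ the $u_{-}$‐dependence drops out entirely from~\eqref{plan:rescaled}, and $W_{\infty}$ becomes a function of $\alpha$ alone. A monotonicity argument on the phase portrait (continuous dependence of the saddle–saddle connection on $\alpha$) plus the two singular limits above yields $\vfpalpha'(0)\in(-1,-1/2)$, with $\vfpalpha'(0)\to -1$ as $\alpha\to 0+$ and $\vfpalpha'(0)\to -1/2$ as $\alpha\to\infty$.

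\textbf{Threshold function and the case $p=0$.} To quantify when the kinetic shock actually differs from the tangent, I would track the saddle–saddle connection produced by the phase‐plane analysis of Theorem~\ref{KKK} as $\alpha$ varies. For $p\leq 1/3$, the implicit‐function theorem applied at the Lax shock $W_{\infty}=\vn(u)/u$ gives a Lipschitz threshold $\anp(u)$ with $\anp(0)=0$, characterising the bifurcation from classical to nonclassical profile. The exponent $p=0$ is special because the linearisation at the equilibrium is non‐degenerate, yielding a nonzero slope $\anp{}'(0\pm)\neq 0$; for $0<p\leq 1/3$ the linearisation becomes degenerate, producing the vertical tangent $\anp{}'(0\pm)=+\infty$. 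In each subcase the value $\vfpalpha'(0)=-1/2$ then follows from the $p<1/2$ analysis above because the limiting shock remains Lax‐admissible (tangent) along the threshold.

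\textbf{Main obstacle.} The formal scaling argument is simple, but the real difficulty is to promote it to a rigorous statement about $\vfpalpha'(0)$: one must prove uniform estimates on the rescaled profile $W$ on the whole line (independent of $u_{-}$), establish compactness of the family in $C^{1}_{\mathrm{loc}}$, and rule out degenerate limits such as solitary‐wave escape or collapse of the heteroclinic to a constant. I expect the critical case $p=\tfrac12$ to be the hardest, since all three terms are of the same order and the saddle–saddle connection must be tracked for every $\alpha>0$, together with the two asymptotic regimes $\alpha\to 0+$ and $\alpha\to+\infty$; continuity and monotonicity of $W_{\infty}(\alpha)$ would be established via a shooting/invariant‐region argument in the $(W,W')$ plane, as in the existence analysis underlying Theorem~\ref{KKK}.
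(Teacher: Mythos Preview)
The paper is a review article and does not actually prove this theorem: the statement is recorded with a reference to the Bedjaoui--LeFloch series \cite{BedjaouiLeFloch1}--\cite{BedjaouiLeFloch5} (see the sentence preceding Theorem~\ref{KKK} and the remark following the present theorem on explicit cubic formulas for $p=0,\,1/2,\,1$). So there is no in-paper proof to compare against; what can be said is whether your sketch matches the strategy of those references.

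Your rescaling $w(y)=u_{-}W(u_{-}y)$ is precisely the right move, and your derivation of \eqref{plan:rescaled} with the factor $\alpha\,u_{-}^{2p-1}$ is correct; this is exactly how the critical exponent $p=1/2$ emerges in the Bedjaoui--LeFloch analysis. Your identification of the three regimes and of the limiting values $W_{\infty}=-1$ (dispersion-dominated, i.e.\ the zero-dissipation value $\varphi_{0}^{\flat\prime}(0)$) and $W_{\infty}=-1/2$ (diffusion-dominated, i.e.\ the tangent value $\varphi^{\natural\prime}(0)$) is also correct, and the remark in the paper that for $p=1/2$ and cubic flux the kinetic function is exactly linear, $\varphi^{\flat}(u)=-c_{\alpha}u$ with $c_{\alpha}\in(1/2,1)$, is the explicit confirmation of your middle case. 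You are equally right that the hard analytic work lies in the uniform bounds and compactness needed to pass to the limit rigorously, and that the $p=1/2$ case requires a genuine phase-plane/shooting argument for every $\alpha$.

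The one place where your sketch is thin is the treatment of the threshold function $A^{\natural}_{p}$. Your appeal to ``degenerate vs.\ non-degenerate linearisation'' does not by itself explain the dichotomy $\anp{}'(0\pm)\neq 0$ for $p=0$ versus $\anp{}'(0\pm)=+\infty$ for $0<p\le 1/3$, nor why the cutoff sits at $p=1/3$ (that value is inherited from Theorem~\ref{KKK}, not derived here). In the cited papers this comes from a separate scaling in which one balances the viscous term against the \emph{tangent}-type orbit rather than the saddle--saddle orbit; the exponent governing $A^{\natural}_{p}$ near $0$ is different from $2p-1$, and you would need to identify it to recover the stated slopes. This is a genuine (if localized) gap in the proposal rather than a fatal flaw: the rest of your outline is on target.
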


\begin{remark} Explicit formulas for the kinetic function  \cite{JMS,HayesLeFloch-scalar,BedjaouiLeFloch4}.
are available when the flux is a cubic, say $f(u)= u^3$ and $p=0, 1/2$ or $1$.  
In particular, when $p = 1/2$ the kinetic function turns out to be the {\sl linear} function  
$$
\vf(u) = - c_\alpha \, u, \qquad  c_\alpha \in (1/2,1). 
$$
\end{remark}


\subsection{Generalization to systems}

The existence and properties of traveling waves for 
the nonlinear elasticity and the Euler equations 
are known in both the hyperbolic \cite{SchulzeShearer,BedjaouiLeFloch2} 
and the hyperbolic-elliptic regimes \cite{Truskinovsky2,BedjaouiLeFloch3,Benzoni,ShearerYang}.
For all other models, only partial results on traveling waves are available. 
 
The existence of nonclassical traveling wave solutions for the thin liquid film model is proven 
by Bertozzi and Shearer in \cite{BertozziShearer}. 
For this model, no qualitative information on the properties of these traveling waves is known, 
and, in particular, the existence of the kinetic relation has not been rigorously established yet.
The kinetic function was computed numerically in LeFloch and Mohamadian \cite{LeFlochMohamadian}. 
For the $3\times 3$ Euler equations, we refer to \cite{BedjaouiLeFloch5}.

The Van de Waals model admits {\sl two} inflection points and leads 
to {\sl multiple} traveling wave solutions. Although the physical significance
of the ``second'' inflection point is questionable, 
given that this model 
is extensively used in the applications it is important to investigate whether additional features
arise. Indeed, it was proven by Bedjaoui, Chalons, Coquel, and LeFloch 
\cite{BCCL} that {\sl non-monotone nonclassical} traveling wave profiles exist, and that 
a single kinetic function is not sufficient to single out the physically relevant solutions.

The Van der Waals-type model with viscosity and capillarity included reads 
$$  
\aligned 
& \tau_t - u_x = 0, 
 \\
& u_t + p(\tau)_x = \alpha \big( \beta(\tau) \, |\tau_x|^q \, u_x \big)_x - \tau_{xxx},  
\endaligned
$$
where $\tau$ denotes the specific volume,
$u$ the velocity, and 
 $\alpha$ the viscosity/capillarity ratio, 
while $q \geq 0$ and $\beta >0$ are parameters.  
We assume here the following convex/concave/convex pressure law:  
$$
\aligned
& p''(\tau) \geq 0,  \quad \tau \in (0,a) \cup (c,+\infty)
\\
& p''(\tau) \leq 0,  \quad \tau \in ( a, c), 
\qquad  p'(a) >0. 
\endaligned
$$ 

To tackle the traveling wave analysis we consider for definiteness 
a $2$-wave issuing from $(\tau_0,u_0)$ at $-\infty$ with speed $\lam>0$, so  
$$
\aligned
& \lam \, (\tau - \tau_0)+ u - u_0 = 0, 
\\
&  \lam \, (u - u_0) - p(\tau) + p(\tau_0)
= -\alpha \, \beta(\tau) |\tau'|^q u'   + \tau''. 
\endaligned
$$
We perform a phase plane analysis in the plane $(\tau,\tau')$: the equations consist of a 
second-order differential equation plus an algebraic equation. 
Fix a left-hand state $\tau_0$ and a speed $\lam$ within the interval 
where there exist three other equilibria $\tau_1,\tau_2,\tau_3$. 
These data are constrained to satisfy the following entropy inequality 
$$
-\lam \, \widetilde U'  + \widetilde F'  = - \alpha \, \beta(\tau) |\tau'|^q (u')^2 < 0
$$
with entropy  
$$
\widetilde U := -\int^\tau p(s) \, ds + {u^2 \over 2} + {(\tau')^2 \over 2}
$$
and 
entropy flux
$$
\widetilde F := u \, p(\tau) +\lam\,  (\tau')^2 + u \, \tau'' 
         - u \, \alpha \, \beta(\tau) \, |\tau'|^q \, u'.
$$

\begin{lemma} [Classification of equilibria] Consider the Van der Waals-type model above. 
For all $q\geq 0$, the equilibria $(\tau_0, 0)$ and $(\tau_2, 0)$ are {saddle} points
(two real eigenvalues with opposite signs).
 For $q = 0$ and $i = 1, 3$, the point $(\tau_i, 0)$ is 
\begin{itemize}
\item a {stable node} (two negative eigenvalues)
if $p'(\tau_i) + \lam^2 \leq (\alpha \lam \beta(\tau_i))^2 / 4$, and 
\item a {stable spiral}  
(two eigenvalues with the same negative real part and with opposite sign and  non-zero imaginary parts)
if $p'(\tau_i) + \lam^2 > (\alpha \lam \beta(\tau_i))^2/ 4$. 
\end{itemize}
For  $q>0$ the equilibria $(\tau_1, 0)$ and $(\tau_3, 0)$ are {centers} 
(two purely imaginary eigenvalues). 
\end{lemma}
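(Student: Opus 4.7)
The plan is to reduce the traveling wave system to a planar ODE, locate the four equilibria as intersections of the pressure graph with a chord, and then read off the eigenvalue structure from the Jacobian at each equilibrium. The only case-dependent ingredient is the friction entry of the Jacobian, which is what ultimately distinguishes $q=0$ from $q>0$.

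First I would use the mass relation $\lam(\tau - \tau_0) + u - u_0 = 0$ to eliminate $u$ everywhere, so that $u - u_0 = -\lam(\tau - \tau_0)$ and, differentiating, $u' = -\lam\,\tau'$. Inserting this into the momentum equation produces the autonomous second-order ODE
\[
\tau'' = G(\tau) - \alpha\,\lam\,\beta(\tau)\,|\tau'|^q\,\tau',
\qquad
G(\tau) := -\lam^2(\tau-\tau_0) - \bigl(p(\tau)-p(\tau_0)\bigr),
\]
which I recast as the planar system $\tau' = v$, $v' = G(\tau) - \alpha\,\lam\,\beta(\tau)\,|v|^q\,v$. Its equilibria lie on $\{v=0\}$ at the zeros of $G$; under the standing assumption these are $\tau_0 < \tau_1 < \tau_2 < \tau_3$, i.e.\ the intersections of the graph of $p$ with the line of slope $-\lam^2$ through $(\tau_0, p(\tau_0))$. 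Because each $\tau_i$ is a simple zero of $G$, the derivative $G'(\tau_i) = -\lam^2 - p'(\tau_i)$ alternates in sign across consecutive equilibria; invoking the convex/concave/convex structure of $p$ and the position of the chord just to the right of $\tau_0$ fixes the alternation and gives $p'(\tau_i) < -\lam^2$ at $i=0,2$ and $p'(\tau_i) > -\lam^2$ at $i=1,3$.

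Next I linearize at $(\tau_i, 0)$. The Jacobian is
\[
J(\tau_i) \;=\; \begin{pmatrix} 0 & 1 \\[2pt] -\lam^2 - p'(\tau_i) & -\alpha\,\lam\,\beta(\tau_i)\,(q+1)\,|v|^q\bigr|_{v=0} \end{pmatrix},
\]
whose $(2,2)$-entry equals $-\alpha\,\lam\,\beta(\tau_i)$ when $q=0$ and vanishes when $q>0$ (since $v \mapsto |v|^q v$ is $C^1$ at the origin with $(q+1)|v|^q\bigr|_{v=0} = 0$ for $q>0$). For $q=0$ the characteristic polynomial is $\mu^2 + \alpha\,\lam\,\beta(\tau_i)\,\mu + (\lam^2 + p'(\tau_i)) = 0$, with negative trace and determinant $\lam^2 + p'(\tau_i)$: at $\tau_0, \tau_2$ the determinant is negative, so the two real eigenvalues have opposite signs and the equilibrium is a saddle; at $\tau_1, \tau_3$ the determinant is positive and the trace is negative, so both eigenvalues have negative real part, and whether they are real (stable node) or a complex conjugate pair (stable spiral) is decided exactly by the sign of the discriminant $(\alpha\,\lam\,\beta(\tau_i))^2 - 4(\lam^2 + p'(\tau_i))$, which yields the threshold stated. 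For $q>0$ the trace is zero and $\det J(\tau_i) = \lam^2 + p'(\tau_i)$, so the eigenvalues are $\pm\sqrt{-(\lam^2 + p'(\tau_i))}$: a saddle at $\tau_0, \tau_2$ and a purely imaginary pair (linear center) at $\tau_1, \tau_3$.

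The main obstacle is the geometric input above, namely the assertion that $\lam^2 + p'$ has the claimed sign pattern across the four equilibria. This is where the convex/concave/convex shape of $p$ and the existence of exactly four chord intersections really enter, and I would verify it with an elementary mean-value argument: between consecutive simple zeros of $G$ the sign of $G$ is constant and must flip at each $\tau_i$, hence $\operatorname{sign} G'(\tau_i)$ alternates; the local convexity of $p$ near the outer roots $\tau_0, \tau_3$ fixes the alternation and thereby pins down each sign. A secondary technical point is that, for $q>0$, the nonlinearity $|v|^q v$ is only $C^1$ at the origin, but this regularity is enough to give a well-defined Jacobian and hence a bona fide linear classification; the (genuinely nonlinear) center/spiral distinction lies outside the scope of the present lemma.
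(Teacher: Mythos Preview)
The paper does not actually prove this lemma; it is stated without proof in a review section that refers to \cite{BCCL} for the full traveling-wave analysis. Your reconstruction is correct and is exactly the natural approach one would expect: eliminate $u$ via the mass relation, write the resulting second-order ODE for $\tau$ as a planar system, and read off the linear type from the Jacobian at each equilibrium. The computation of the characteristic polynomial $\mu^2 + \alpha\lambda\beta(\tau_i)\mu + (\lambda^2 + p'(\tau_i)) = 0$ for $q=0$, the vanishing of the friction entry for $q>0$, and the resulting node/spiral threshold are all correct.

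Two small remarks. First, the paper does not specify the ordering $\tau_0 < \tau_1 < \tau_2 < \tau_3$; your alternation argument for the sign of $G'(\tau_i)$ is right in spirit, but the fact that the \emph{particular} equilibria $\tau_0,\tau_2$ are the saddles (as opposed to $\tau_1,\tau_3$) is really a labeling convention imported from \cite{BCCL}, fixed by the geometry of the chord relative to the convex/concave/convex graph of $p$. You handle this adequately in your final paragraph. Second, your caveat about the $q>0$ case is well placed: the Jacobian has purely imaginary eigenvalues, so the linearization only gives a \emph{linear} center, and whether the full nonlinear flow has a genuine center requires more (e.g., a Lyapunov function or the energy structure); the lemma as stated is about the eigenvalue structure, so this is enough.
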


In \cite{BCCL}, it is proven that  
there exists a {decreasing sequence} of diffusion/dispersion ratio $\oalpha_n(\tau_0,\lam) \to 0$
for $n \geq 0$ such that: 
\begin{itemize}

\item For $\alpha = \oalpha_n$ there exists a nonclassical traveling 
wave with $n$ oscillations connecting $\tau_0$ to $\tau_2$. 

\item For $\alpha \in (\oalpha_{2 m + 2}, \oalpha_{2 m + 1}) \cup (\oalphaz, +\infty)$, there exists  
a classical traveling wave connecting $\tau_0$ to $\tau_1$.

\item For $\alpha \in (\oalpha_{2 m + 1}, \oalpha_{2 m})$ there exists a classical traveling wave
 connecting $\tau_0$ to $\tau_3$.

\end{itemize}

In comparison, in the case of a single inflection point one has 
a single critical value $\oalpha_0(\tau_0,\lam)$, only. Here, we have infinitely many 
non-monotone, nonclassical trajectories associated with a sequence $\alpha_n \to 0$.  
This new feature observed with the van der Waals model indicates that 
the right-hand side across a given undercompressive wave
is not unique, 
and several kinetic functions  
should be introduced, leading also to non-uniqueness for the Riemann problem.


\section{Existence and uniqueness theory for nonclassical entropy solutions} 

\subsection{Dafermos' front tracking scheme}  

For simplicity, consider a conservation law \eqref{127} with concave-convex flux 
and impose the initial data 
\be
\label{idata} 
u(x,0) = u_0(x)
\qquad u_0 \in BV(\RR), 
\ee
where $u_{0,x}$ is a bounded measure and the total variation 
$TV(u_0)$ represents the total mass of this measure. 

By considering the nonclassical Rieman  solver based on some kinetic function $\varphi^\flat$
and following Dafermos \cite{Dafermos-front},
we construct a piecewise constant approximation $u^h: \RR^+ \times \RR \to \RR$, as follows.
First, one 
approximates the initial data $u_0$ with a piecewise constant function $u^h(0, \cdot)$. 
At the time $t=0$, one then solves a Riemann problem at each jump point of $u^h(0, \cdot)$. 
If necessary, one replaces rarefaction waves by several small fronts, traveling with the 
Rankine-Hugoniot speed. 
Then, at each interaction of waves, solve a new Riemann problem and continue the procedure inductively
in order to construct a globally defined, piecewise constant approximate solution $u^h= u^h(t,x)$. 
 
Clearly, in order to prove the convergence of the above approximation method, several difficulties must be overcome.
First of all, one needs to show that the total number of wave fronts as well as the total number of interaction points 
remain finite for all fixed time. In the scalar case under consideration and
 for concave-convex flux-functions, 
this is actually an easy matter since each nonclassical Riemann solution contains at most two outgoing waves.  
 Most importantly, one needs to derive a uniform bound (independent of $h$) 
on the total variation $TV(u^h(t, \cdot))$.
However, due to the lack of monotonicity of the nonclassical Riemann solver,  
the (standard) total variation may increase at interactions. 
For systems, further difficulties arise due to the lack of regularity of the wave curves, 
and one must also control nonlinear interactions between waves of different characteristic families.

Our assumptions on the kinetic function are very mild. 
We require that 
 $\vf : \RR \to \RR$ is Lipschitz continuous, monotone decreasing,  
 and that the second iterate of $\vf$ is a {strict contraction}: for $K \in (0,1)$
\be
\label{hipo}
       \big| \vf \circ \vf(u) \big| \leq K |u|, \qquad   u \neq 0.
\ee
Since 
$\big| \vf \circ \vf(u) \big| < |u|$
 for $u \neq 0$, 
this is  equivalent to imposing  
$\Lip_{u=0} (\vf\circ \vf)  < 1$, 
that is, only a condition on nonclassical shocks with infinitesimally small strength. 
For systems of equations, similar conditions make sense and are realistic for the application.


\subsection{Generalized wave strength} 

Following Baiti, LeFloch, and Piccoli~\cite{BLP-contemp} and
Laforest and LeFloch~\cite{LaforestLeFloch}, we define  the following {\sl generalized wave strength} 
\be
\label{ss}
\sigma(u_-, u_+) := | \psi(u_-) - \psi(u_+) |, 
\qquad \qquad \quad 
\psi(u) :=
\begin{cases}
u,     & u>0, 
\\
\varphi_0^\flat(u),  & u < 0. 
\end{cases}
\ee
This definition has several advantages. First, it compares states with the same sign.  
Second, it is ``equivalent'' to the standard definition of strength, in the sense that 
$$
\underline C \, | u_- - u_+ | \leq \sigma (u_-, u_+) \leq \overline C  \, | u_- - u_+ |.
$$
Third, it enjoys a continuity property as $u_+$ crosses $\varphi^\sharp(u_-)$, 
during a transition from a single crossing shock to a two wave pattern:  
$$ 
\aligned
\sigma(u_-,\vs(u_-)) & = \big| u_- - \vfz \circ \vs( u_-) \big| 
\\
& = \big| u_- - \vfz \circ \vf( u_-) \big| + \big| \vfz \circ \vf( u_-) -  \vfz \circ \vs( u_-) \big|
\\
& = \sigma(u_-, \vf(u_-)) + \sigma( \vf(u_-), \vs(u_-)).    
\endaligned
$$

We then define a {\sl generalized total variation functional,} 
for a piecewise constant function $u=u(t, \cdot)$ made of shock or rarefaction fronts $(u_-^\alpha, u_+^\alpha)$, 
by 
$$
   V \big(  u(t) \big) := \sum_\alpha  \sigma(  u_-^\alpha, u_+^\alpha ), 
$$ 
which again is ``equivalent'' to the standard total variation
$$
TV \big(  u(t) \big) := 
           \sum_\alpha  \big|  u_-^\alpha - u_+^\alpha \big|.
$$

A classification of all possible wave interaction patterns is given in \cite{ABLP,LeFloch-book},  
and about $20$ cases must be distinguished. 
Certain cases give rise to an increase of the standard total variation, which is not even proportional 
to the (smallest) strength of the incoming waves.  For instance, a (decreasing) classical shock may interact with a 
(decreasing) rarefaction coming from the righ-hand side, 
and transform into a (decreasing) nonclassical shock followed by an (increasing!) classical shock. 
Here, the outgoing wave profile is non-monotone and 
the standard total variation $TV \big(  u^h(t) \big)$ increases.
However, $V \big(  u^h(t) \big)$ does decrease.
Another possible interaction is provided by a (decreasing) nonclassical shock 
which hits an (increasing) classical shock and transforms itself into a (decreasing) 
classical shock. In that case, significant decay of the total variation occurs, and 
both the standard total variation $TV \big(  u^h(t) \big)$ and the generalized one $V \big(  u^h(t) \big)$ 
decrease.

  
\subsection{Existence theory}

We have the following estimates and existence theory. 

\begin{proposition}[Diminishing generalized total variation property]  
Consider a kinetic function satisfying the strict contraction property \eqref{hipo}. 
Then,  along a sequence of front tracking approximations based on the nonclassical Riemann solver, 
the generalized total variation functional $V= V\big( u^h(t) \big)$ is {non-increasing}.
\end{proposition}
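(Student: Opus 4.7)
The plan is to argue that the generalized variation $V(u^h(t))$ is piecewise constant in $t$ and only changes at wave-interaction points, and then to show that at each such interaction the change is non-positive. Since $u^h$ is piecewise constant and fronts propagate along straight lines between interactions, $V(u^h(t))$ is constant on each time interval $(t_k,t_{k+1})$ where no interaction occurs, so the entire claim reduces to a local estimate at each interaction point.

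First I would list all possible pairs of incoming fronts. Each incoming wave is one of: a classical shock, a nonclassical shock, or a rarefaction front, and in a scalar setting only two fronts can meet at one point. Combining this with the sign of the intermediate and extreme states, one obtains a finite list (the $\sim 20$ cases referred to in the paper, classified in \cite{ABLP,LeFloch-book}). For each case one knows explicitly, from the nonclassical Riemann solver, the structure of the outgoing fan (a rarefaction, a single classical shock, a single nonclassical shock, or a nonclassical shock followed by a classical shock/rarefaction), so one can write $V_{\text{out}}-V_{\text{in}}$ as an explicit combination of differences $\psi(u_i)-\psi(u_j)$.

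The crucial analytical ingredient is the definition \eqref{ss} of $\psi$, together with two structural facts: the zero-dissipation map $\varphi^\flat_0$ is an involution with $\varphi^\flat_0\circ\varphi^\flat_0=\mathrm{id}$, and the kinetic map satisfies the strict contraction \eqref{hipo}, $|\varphi^\flat\circ\varphi^\flat(u)|\leq K|u|$ with $K<1$. The continuity identity displayed just after \eqref{ss}, namely
\[
\sigma(u_-,\vs(u_-)) = \sigma(u_-,\vf(u_-)) + \sigma(\vf(u_-),\vs(u_-)),
\]
lets one treat the threshold transition from a single classical shock to a two-wave pattern as a continuous deformation across which $V$ does not jump. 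The heart of each nontrivial case is that any state of the form $\varphi^\flat\circ\varphi^\flat(u)$ appearing after interaction is measured by $\psi$ against $u$, and the contraction property then yields a strictly decreasing contribution, which absorbs the possible increase of ordinary $TV$. Monotonicity of $\varphi^\flat$ and of $\varphi^\flat_0$ (both decreasing) guarantees that signs behave as needed so the triangle-type identities for $\psi$ close without extra error terms.

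I expect the main obstacle to be the bookkeeping for the exceptional pattern where an incoming classical-shock/rarefaction pair produces an outgoing non-monotone pair consisting of a nonclassical shock followed by a classical shock: here the ordinary $TV$ genuinely increases, and one must exploit both the change of sign built into $\psi$ (so that the two outgoing fronts are measured against the same branch) and the contraction $|\varphi^\flat\circ\varphi^\flat(u)|\le K|u|$ to produce a non-positive balance. All remaining cases are either trivial (one outgoing wave and monotone wave curves), or they are cases of cancellation where both $TV$ and $V$ decrease, and thus follow from elementary triangle inequalities applied to $\psi$.
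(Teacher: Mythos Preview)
Your proposal is correct and follows essentially the same approach as the paper: the paper (via the references \cite{ABLP,LeFloch-book,BLP-existence,LaforestLeFloch}) reduces the claim to a case-by-case analysis of the $\sim 20$ interaction patterns, using precisely the generalized strength $\sigma$ built from $\psi$, the continuity identity at the threshold $\vs(u_-)$, and the contraction hypothesis \eqref{hipo} to handle the non-monotone outgoing pattern where the ordinary $TV$ increases. The ingredients and the organization you describe match the paper's outline.
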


\begin{theorem} [Existence of nonclassical entropy solutions \cite{BLP-existence,LaforestLeFloch}] 
\label{the}  
Consider a kinetic function compatible with a convex entropy $U$
and satisfying the strict contraction property \eqref{hipo}. 
For each initial data $u_0 \in BV(\RR)$, the 
wave front tracking approximations $u^h=u^h(t,x)$
constructed from the nonclassical Riemann solver 
satisfy 
$$ 
\aligned
& \|u^h(t)\|_{L^\infty(\RR)} \lesssim \|u_0\|_{L^\infty(\RR)}, \qquad 
\\
&
TV\big( u^h(t) \big) \lesssim TV(u_0),   
\\
& 
\|u^h(t) - u^h(s) \|_{L^1(\RR)} \lesssim |t - s|,  
\endaligned 
$$ 
and converge in $L^1$ to a weak solution 
$$
u=u(t,x) \in \Lip \big( [0,+\infty), L^1(\RR) \big) \cap L^{\infty}\big([0,+\infty),\operatorname{BV}(\RR)\big)
$$
of the initial value problem, 
which satisfies the entropy inequality 
$$ 
U(u)_t +  F(u)_x \leq 0.
$$  
\end{theorem}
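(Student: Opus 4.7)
The plan is to combine the diminishing generalized total variation property with standard compactness arguments, then verify that the limit inherits the weak formulation and the entropy inequality.

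First I would ensure that the front tracking procedure is well defined globally in time. For each $h$, approximate $u_0$ by a piecewise constant $u^h(0,\cdot)$ with $\|u^h(0,\cdot)\|_\infty \le \|u_0\|_\infty$ and $TV(u^h(0,\cdot)) \le TV(u_0)$, solve a Riemann problem at each jump using the nonclassical Riemann solver, and discretize each rarefaction into small fronts. Since each resolved Riemann problem produces at most two outgoing fronts and the generalized functional $V(u^h(t))$ is non-increasing across interactions, a standard combinatorial argument (interactions strictly decrease either the number of fronts or a carefully weighted interaction potential, bounded below by zero) shows that only finitely many interactions occur on any bounded time interval. Hence $u^h$ is defined for all $t \geq 0$ as a piecewise constant function with finitely many fronts on compact sets.

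Next I would derive the three a priori bounds. The $L^\infty$ bound follows from the maximum principle for the nonclassical Riemann solver: at every interaction the outgoing states remain in the convex hull of the incoming states times a bounded dilation constant inherited from the kinetic function (since $\vf$ is Lipschitz), so a uniform $L^\infty$ bound follows from the bound on $u^h(0,\cdot)$. The $BV$ bound is immediate from the equivalence $\underline C|u_- - u_+| \le \sigma(u_-,u_+) \le \overline C |u_- - u_+|$ combined with the diminishing property $V(u^h(t)) \le V(u^h(0))$. The $L^1$-in-time Lipschitz estimate is then standard: since $u^h$ is made of finitely many fronts with speeds uniformly bounded (by the Lipschitz constant of $f$ on the range of $u^h$),
\[
\|u^h(t) - u^h(s)\|_{L^1(\RR)} \le C\,TV(u^h) \,|t-s| \lesssim TV(u_0)\,|t-s|.
\]

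With these three bounds in hand, Helly's theorem (applied pointwise in time to $u^h(t,\cdot)$) together with the equicontinuity in time gives a subsequence converging in $L^1_{\text{loc}}(\RR^+ \times \RR)$, and also pointwise a.e.\ after a further extraction, to a limit
\[
u \in \Lip\bigl([0,+\infty), L^1(\RR)\bigr) \cap L^\infty\bigl([0,+\infty), BV(\RR)\bigr).
\]
Passing to the limit in the weak formulation $\iint (u^h \varphi_t + f(u^h) \varphi_x)\,dxdt$ is routine by dominated convergence, using the $L^\infty$ bound and continuity of $f$. The same argument applied to $(U(u^h), F(u^h))$ yields the entropy inequality in the limit, provided one controls the entropy dissipation across each front in $u^h$. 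Here one uses that classical shocks produced by the Riemann solver satisfy the entropy inequality with a non-positive dissipation, while nonclassical shocks satisfy the kinetic relation, which by compatibility with the entropy $U$ also has non-positive dissipation; rarefaction fronts contribute an error of order $O(h)$ that vanishes in the limit.

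The main obstacle I anticipate is the interaction analysis needed to justify that $V$ is non-increasing through every one of the roughly twenty interaction patterns (already invoked from the preceding proposition), and, in parallel, the verification that no infinite accumulation of interactions or fronts occurs in finite time; the strict contraction hypothesis $\Lip_{u=0}(\vf\circ\vf) < 1$ is precisely what prevents resonance between successive nonclassical shocks and provides the quantitative decay that closes this argument.
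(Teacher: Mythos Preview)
Your approach matches the paper's: invoke the diminishing generalized total variation functional $V$ from the preceding proposition, derive uniform $BV$, $L^\infty$, and time-Lipschitz bounds, extract a limit via Helly's theorem, and pass to the limit in the weak formulation and in the entropy inequality. The paper only sketches this (it is a survey deferring to the cited references), and your outline fills in the standard compactness details correctly, including the observation that at most two outgoing fronts per Riemann problem keeps the front count under control.

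One point to tighten: your $L^\infty$ argument invokes a ``maximum principle for the nonclassical Riemann solver,'' but no such principle holds. The intermediate state $\vf(u_l)$ generically lies \emph{outside} the interval spanned by $u_l$ and $u_r$, and a ``bounded dilation constant'' at each step could in principle compound over many interactions. The clean route is to deduce the $L^\infty$ bound \emph{from} the $BV$ bound you have already established: since $u^h(t,\cdot)$ coincides with $u^h(0,\cdot)$ outside a bounded interval and $TV(u^h(t)) \lesssim TV(u_0)$, one gets $\|u^h(t)\|_{L^\infty} \le \|u^h(0,\cdot)\|_{L^\infty} + TV(u^h(t)) \lesssim \|u_0\|_{L^\infty} + TV(u_0)$.
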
 

\ 

The uniquenesss of nonclassical entropy solutions is established in Baiti, LeFloch, and Piccoli \cite{BLP-unique}
within the class of functions with tame variation.

We note that pre-compactness of the sequence of approximate solutions follows from Helly's compactness theorem.
The behavior near $u=0$ is important to prevent a blow-up of the total variation. 
In fact, the condition
 ${\varphi^\flat}'(0-) \, {\varphi^\flat}'(0+) <1$
is indeed satisfied by kinetic functions generated by the nonlinear diffusion-dispersion model 
$$
\alpha \, \bigl( b(u,u_x) \, |u_x|^p \, u_x \bigr)_x + \bigl(c_1(u) \, (c_2(u) \, u_x)_x\bigr)_x 
$$
{\sl provided} $p<1/2$. Counter-example of blow-up of the total variation are available
 if $\vf(0) = -1$. See \cite{BLP-existence}.

Some additional existence results are available. 
Perturbations of a given nonclassical wave are analyzed in 
LeFloch \cite{LeFloch-ARMA}, Corli and Sabl\'e-Tougeron \cite{CST2,CST2}, Colombo and Corli \cite{CC1,CC2,ColomboCorli04a}, 
Hattori \cite{Hattori2}, 
Laforest and LeFloch \cite{LaforestLeFloch2}.
For a version of Glimm's wave interaction potential adapted to nonclassical solutions, 
we refer to \cite{LaforestLeFloch}. 
The $L^1$ continuous dependence of nonclassical entropy solutions is still an open problem, and  
it would be interesting to generalize to nonclassical shocks 
the techniques developed by 
Bressan et al. \cite{Bressan-book}, LeFloch et al. \cite{HuLeFloch,GoatinLeFloch,LeFloch-book}, and 
Liu and Yang \cite{LiuYang}.

\begin{remark} An alternative strategy to establish Theorem~\ref{the} is developed
 in \cite{BLP-charact} 
which, however, applies only to scalar equations and  
requires stronger conditions on the kinetic function.  
The proposed technique of proof therein is a decomposition of the real line 
into intervals where the approximate solution is alternatively increasing/decreasing. 
It requires a suitable application of Fillipov-Dafermos's theory of generalized characteristics to 
track the maxima and minima of the approximate solution. 
\end{remark}


\section{Finite difference schemes with controled dissipation}

\subsection{The role of the equivalent equation}

For simplicity in the presentation, we consider the case of the scalar conservation law 
\be
\label{222}
u_t +  f(u)_x = \eps \, u_{xx} + \alpha \, \eps^2 \, u_{xxx}, 
\ee
and formulate the following question. 
Denoting by $u_\alpha$ the limit when $\eps \to 0$
and by $\varphi^\flat_\alpha$ the associated kinetic function, 
can we design a numerical scheme  converging precisely to the function $u_\alpha$ ? 

One immediate and positive answer is provided by Glimm-type schemes, for 
which theoretical convergence results have been described in earlier sections. 
This convergence was illustrated by numerical experiments performed 
with the Glimm scheme in \cite{ChalonsLeFloch-Glimm}.
Another successful strategy is based on the level set technique implemented in
 \cite{HRL,CDMR,MerkleRohde} for a nonlinear elasticity model, with trilinear law, in two spatial dimensions, 
exhibiting complex interfaces with needles attached to the boundary. 
In addition, 
methods combining differences and interface tracking were also developed \cite{ZHL,Chalons,BCLL}, 
which ensure that the interface is sharp and (almost) exactly propagated. 

In the present section, we focus on {\sl finite difference schemes} and follow
Hayes and LeFloch \cite{HayesLeFloch-schemes}. 
Let $u_\alpha^{\Delta x}$ be some numerical solution and 
$v_{\alpha}  := \lim_{{\Delta x} \to 0} u_\alpha^{\Delta x}$ be its limit.
We are assuming that, at least at the practical level, the scheme is converging in a strong sense
and does generate nonclassical shocks whose dynamics can be described by a kinetic function $\psi^\flat_\alpha$.

It was observed in \cite{HayesLeFloch-schemes} that 
$$
v^\alpha\neq u^\alpha,  \qquad \psi^\flat_\alpha \neq \varphi^\flat_\alpha, 
$$ 
even if the scheme is conservative, consistent, high-order accurate, etc. 
 The point is that in both the continuous model and the discrete scheme, small scale features are critical to the selection of shocks. The balance between diffusive and dispersive features determines which shocks are selected. 
Note in passing that in 
nonconservative systems, the competition takes place between the (hyperbolic) propagation part and the (viscous) regularization \cite{HouLeFloch}. 

These small scale features cannot be quite the same at the continuous and
 discrete levels, since a continuous dynamical system of ordinary differential equations cannot be exactly represented by a discrete dynamical system of finite difference equations. 
 Consequently, 
 finite difference schemes do not converge to the correct weak solution when small-scaled are the driving factor for the selection of shock waves.

It was proposed \cite{HayesLeFloch-schemes} that $\psi^\flat_\alpha$ should be an  
accurate approximation of $\varphi^\flat_\alpha$
and  {\sl schemes with controled dissipation} were developed in \cite{HayesLeFloch-schemes,LeFlochRohde,ChalonsLeFloch1,ChalonsLeFloch2,LMR,LeFlochMohamadian}, 
which 
rely on high-order accurate, discrete hyperbolic flux and 
high-order discretizations of the augmented terms (diffusion, dispersion). 
More precisely, it is here required that the {\sl equivalent equation} of the scheme 
coincide
 with the augmented physical model,
up to a sufficiently high order of accuracy. For instance, for 
\eqref{222}, we require that after Taylor expanding the coefficients of the numerical scheme
\be
\label{333}
u_t +  f(u)_x 
=  \Delta x \, u_{xx} + \alpha \, (\Delta x)^2 \, u_{xxx} + O(\Delta x)^p, 
\ee
for $p \geq 3$ at least.

It was conjectured by the author that as $p \to \infty$ the kinetic function $\psi^\flat_{\alpha,p}$ associated with a scheme having the equivalent equation 
\eqref{333}
converges to the exact kinetic function $\varphi_\alpha^\flat$, that is, 
$$
\lim_{p \to \infty} \psi^\flat_{\alpha,p} = \varphi_\alpha^\flat.
$$
Support for this conjecture was recently provided by LeFloch and Mohamadian \cite{LeFlochMohamadian}, 
who performed extensive numerical tests 
for several models including the generalized Camassa-Holm and Van der Waals ones. 


\subsection{The role of entropy conservative schemes}

In addition, the role of entropy conservative schemes was stressed in \cite{HayesLeFloch-schemes,LeFlochRohde}. 
Schemes have been built 
from higher order accurate, entropy conservative, discrete flux. Such schemes 
satisfy discrete versions of the physically relevant entropy inequality, 
hence preserve exactly (and globally in time) an approximate entropy balance  

The design of entropy conservative schemes is based on entropy variable. Consider 
a system of conservation laws \eqref{IN.system} endowed with an entropy pair $(U,F)$. 
Suppose that $U$ strictly convex or, more generally, 
$f(u)$ can be expressed as a function of $v$, and 
let $v(u)= \nabla U(u) \in \Vcal := \nabla U(\Ucal)$ be the entropy variable. 
Finally, set $f(u) = g(v)$, $F(u) = G(v)$, and $B(v) = Dg(v)$. 
It is easily checked that $B(v)$ is symmetric, since $Dg(v) = Df(u) D^2 U(u)^{-1}$. So, there exists  $\psi(v)$ such that 
$g=\nabla\psi$ and, in fact, 
$$
\psi(v) = v \cdot g(v) - G(v). 
$$

On a regular mesh $x_j = j \, h$ ($j=\ldots, -1,0,1, \ldots$), 
consider $(2p+1)$-point, conservative, semi-discrete schemes 
$$
{d \over dt} u_j = - {1 \over h} \, (g^*_{j+1/2} - g^*_{j-1/2} ), 
$$
where $u_j = u_j(t)$ represents an approximation of $u(x_j,t)$. The discrete flux 
$$
g^*_{j+1/2} = g^*(v_{j-p+1}, \cdots, v_{j+p}),       
\qquad  
v_j = \nabla U(u_j) 
$$
must be consistent with the exact flux $g$, i.e.   
$$
g^*(v, \ldots, v) = g(v).  
$$

Tadmor in \cite{Tadmor-old} introduced the notion of entropy conservative schemes
and focused on essentially three-point schemes,
for which $g^*(v_{-p+1}, \cdots, v_p) = g(v)$ when $v_0= v_1=v$, 
which implies second-order accuracy. 
The two-point numerical flux \cite{Tadmor-old} 
$$
g^*(v_0, v_1) = \int_0^1 g(v_0 + s \, (v_1 - v_0)) \, ds, 
\qquad v_0, v_1 \in \Ucal.
$$
yields an entropy conservative scheme, satisfying 
$$
{d \over dt} U(u_j) + {1 \over h} \, (G^*_{j+1/2} - G^*_{j-1/2} ) =0, 
$$
with  
$$
\aligned
G^*(v_0, v_1) = 
& {1 \over 2} \, (G(v_0) + G(v_1))  
+ {1 \over 2} \, (v_0 + v_1) \, g^*(v_0, v_1) 
\\
& - {1 \over 2} \, (v_0 \cdot g(v_0) + v_1 \cdot g(v_1)), \qquad v_0, v_1 \in \Vcal. 
\endaligned
$$
This scheme admits the following second-order accurate, (conservative) equivalent equation~\cite{LeFlochRohde} 
$$
u_t +  f(u)_x = {h^2 \over 6} \, \Big(-g(v)_{xx}  
+ {1 \over 2} \, v_x \cdot Dg(v)_x\Big)_x 
$$
with $v = \nabla U(u)$.  This is not sufficient for our purpose of tackling the diffusion-dispersion model.

High-order entropy conservative schemes were discovered by LeFloch and Rohde \cite{LeFlochRohde}. 
Later, generalizations to arbitrarily high order were found in \cite{LMR,Tadmor-acta,Tadmor-recent}.

\begin{theorem} [Third-order entropy conservative schemes] 
Consider the semi-disrete $(2p+1)$-point scheme
$$
{d \over dt}  u_j = - {1 \over h} \, (g^*_{j+1/2} - g^*_{j-1/2} ),\quad
g^*_{j+1/2} = g^*(v_{j-p+1}, \cdots, v_{j+p})
$$
with numerical flux $g^*$ defined as follows,    
from any symmetric $N \times N$ matrices $B^*(v_{-p+2}, \cdots,v_p)$, 
$$
\aligned 
& g^*(v_{-p+1}, \cdots, v_p) 
\\
& = \int_0^1 g\big(v_0 + s \, (v_1 - v_0)\big) \, ds   -  {1 \over 12 } \Big(&&(v_2-v_1) \cdot B^*(v_{-p+2}, \cdots, v_p) 
\\
& \qquad  && -  (v_0- v_{-1}) \cdot B^*(v_{-p+1}, \cdots,v_{p-1})\Big). 
\endaligned
$$ 
This scheme is entropy conservative for the entropy $U$, with entropy flux 
$$
\aligned  
& G^*(v_{-p+1}, \cdots, v_p) 
\\
& = {1  \over 2} (v_0 + v_1) \cdot g^*(v_{-p+1}, \cdots, v_p) 
 - {1 \over 2} \Big(\psi^*(v_{-p+2}, \cdots, v_p) 
        + \psi^*(v_{-p+1}, \cdots, v_{p-1})\Big). 
\endaligned
$$
and 
$$\aligned 
& \psi^*(v_{-p+2}, \cdots, v_p) 
\\
& = v_1 \cdot g(v_1) - G(v_1) 
+ {1 \over 12} (v_1 - v_0) \cdot B^*(v_{-p+2}, \cdots, v_p) (v_1- v_2).    
\endaligned
$$
When $p=2$ and $B^*(v,v,v) = B(v) \Big( = Dg(v)\Big)$, 
this five-point scheme is third-order, at least. 
\end{theorem}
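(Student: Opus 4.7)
The proof splits into two essentially independent parts: the discrete entropy identity, which is an algebraic property valid for \emph{any} choice of symmetric matrices $B^*$, and the third-order accuracy at $p=2$ under the hypothesis $B^*(v,v,v) = B(v)$, which is a Taylor expansion computation.

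\textbf{Entropy conservation.} Multiplying the semi-discrete scheme by the entropy variable $v_j = \nabla U(u_j)$ and using $v_j \cdot \tfrac{d}{dt}u_j = \tfrac{d}{dt}U(u_j)$, the claim reduces to
\[
v_j \cdot (g^*_{j+1/2} - g^*_{j-1/2}) = G^*_{j+1/2} - G^*_{j-1/2}.
\]
In the definition of $G^*$, the value $\psi^*(v_{j-p+1}, \ldots, v_{j+p-1})$ appears with coefficient $-\tfrac{1}{2}$ inside \emph{both} $G^*_{j+1/2}$ and $G^*_{j-1/2}$, and cancels in the difference. After regrouping, the identity reduces to Tadmor's local interface relation
\[
(v_1 - v_0) \cdot g^*(v_{-p+1}, \ldots, v_p) = \psi^*(v_{-p+2}, \ldots, v_p) - \psi^*(v_{-p+1}, \ldots, v_{p-1}),
\]
which I verify by splitting $g^*$ and $\psi^*$ into their integral and $B^*$-correction pieces. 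The integral pieces match via the classical line-integral identity
\[
(v_1-v_0)\cdot \int_0^1 g\bigl(v_0 + s(v_1-v_0)\bigr)\,ds = \int_0^1 \tfrac{d}{ds}\psi\bigl(v_0 + s(v_1-v_0)\bigr)\,ds = \psi(v_1)-\psi(v_0),
\]
using $g = \nabla\psi$ together with the definition $\psi(v) = v\cdot g(v) - G(v)$. The $B^*$-correction pieces on both sides match directly after invoking the symmetry of $B^*$ and the bilinear identity $(v_1-v_0) \cdot B^*_+ (v_2-v_1) = (v_2-v_1) \cdot B^*_+(v_1-v_0)$, which produces the sign pattern of the corresponding terms on the right.

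\textbf{Third-order accuracy.} For $p=2$, I Taylor-expand every nodal value $v_{j+k}$ around $\tilde v := v(x_{j+1/2})$. A direct expansion of the integral flux yields
\[
\int_0^1 g\bigl((1-s)v_j + s v_{j+1}\bigr)\,ds = g(\tilde v) + \tfrac{h^2}{8} Dg(\tilde v)\,v_{xx} + \tfrac{h^2}{24} D^2 g(\tilde v)(v_x, v_x) + O(h^4).
\]
Differentiating the hypothesis $B^*(v,v,v)=B(v)$ along the diagonal gives $\partial_1 B^* + \partial_2 B^* + \partial_3 B^* = D^2 g$, and I check that the off-diagonal freedom in the $\partial_i B^*$ enters the combination $(v_{j+2}-v_{j+1}) \cdot B^*_+ - (v_j - v_{j-1}) \cdot B^*_-$ only at $O(h^3)$: only the diagonal value $B(\tilde v)$ and the constrained sum survive at $O(h^2)$. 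A short computation then gives
\[
-\tfrac{1}{12}\bigl[(v_{j+2}-v_{j+1}) \cdot B^*_+ - (v_j-v_{j-1}) \cdot B^*_-\bigr] = -\tfrac{h^2}{6} Dg(\tilde v) v_{xx} - \tfrac{h^2}{12} D^2 g(\tilde v)(v_x, v_x) + O(h^3),
\]
and adding the two contributions,
\[
g^*_{j+1/2} = g(\tilde v) - \tfrac{h^2}{24}\bigl[Dg(\tilde v)\, v_{xx} + D^2 g(\tilde v)(v_x, v_x)\bigr] + O(h^3) = g(\tilde v) - \tfrac{h^2}{24}\bigl(g(v)\bigr)_{xx}\bigl(x_{j+1/2}\bigr) + O(h^3).
\]
The crucial observation is that the $O(h^2)$ remainder is a total $x$-derivative. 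Setting $\Psi(x) := g(v(x)) - \tfrac{h^2}{24} g(v)_{xx}(x)$ and applying the midpoint central-difference expansion,
\[
\frac{\Psi(x_{j+1/2}) - \Psi(x_{j-1/2})}{h} = \Psi'(x_j) + \tfrac{h^2}{24}\Psi'''(x_j) + O(h^4) = g(v)_x(x_j) + O(h^4),
\]
since the $-\tfrac{h^2}{24} g(v)_{xxx}(x_j)$ from $\Psi'$ cancels the $+\tfrac{h^2}{24} g(v)_{xxx}(x_j)$ truncation error of the central difference of $g(v)$ itself. Hence $(g^*_{j+1/2} - g^*_{j-1/2})/h = \partial_x g(v)(x_j) + O(h^3)$.

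\textbf{Main obstacle.} The delicate point is the verification that the $O(h^2)$ remainder in $g^*_{j+1/2}$ assembles into exactly the total $x$-derivative $-\tfrac{h^2}{24}\,g(v)_{xx}$: without this precise structure, the cancellation at the level of the discrete divergence would fail and the scheme would only be second-order. The $\tfrac{1}{12}$ prefactor, the specific placement of the one-sided differences $(v_{j+2}-v_{j+1})$ and $(v_j - v_{j-1})$ on either side of the interface, and the diagonal consistency $B^*(v,v,v) = B(v)$ are all rigged so that the $B^*$-correction produces exactly the missing $-\tfrac{h^2}{6}\,Dg\cdot v_{xx} - \tfrac{h^2}{12}\,D^2g(v_x,v_x)$, and so that the freedom in choosing $B^*$ off the diagonal drops out at $O(h^3)$ in the relevant combination.
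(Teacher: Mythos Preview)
The paper states this theorem without proof, citing LeFloch--Rohde \cite{LeFlochRohde} for the details, so there is no in-text argument to compare against. Your proof is correct in both parts, and follows what is in fact the natural route.

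The entropy-conservation step correctly reduces the discrete entropy balance to Tadmor's interface relation $(v_1-v_0)\cdot g^* = \psi^*_+ - \psi^*_-$ and verifies it by splitting into the line-integral and $B^*$-correction pieces; the symmetry of $B^*$ is precisely what makes the bilinear correction terms match after the swap $(v_1-v_0)\cdot B^*_-(v_0-v_{-1}) = (v_0-v_{-1})\cdot B^*_-(v_1-v_0)$. For the accuracy step, your Taylor computation is right, including the observation that only the constrained sum $\partial_1 B^* + \partial_2 B^* + \partial_3 B^* = D^2 g$ survives at order $h^2$ in the difference $B^*_+(v_{j+2}-v_{j+1}) - B^*_-(v_j-v_{j-1})$, so the off-diagonal freedom of $B^*$ is pushed to $O(h^3)$. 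The cancellation of $-\tfrac{h^2}{24}\,g(v)_{xxx}$ from $\Psi'$ against the $+\tfrac{h^2}{24}\,g(v)_{xxx}$ truncation of the central difference is the correct mechanism.

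One small point worth making explicit: when you pass from $g^*_{j+1/2} = \Psi(x_{j+1/2}) + O(h^3)$ to $(g^*_{j+1/2}-g^*_{j-1/2})/h = g(v)_x(x_j) + O(h^3)$, you are implicitly using that the $O(h^3)$ remainder has the structured form $h^3\rho(x_{j+1/2}) + O(h^4)$ for a smooth function $\rho$, so that its divided difference contributes $O(h^3)$ rather than $O(h^2)$. This holds because the numerical flux is a smooth function of the nodal values $v(x_{j+1/2} + kh)$ and the scheme is translation-invariant, but it deserves a sentence.
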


Although schemes with controled dissipation based on an analysis of their equivalent equation
do not converge to the exact solution determined by a given augmented 
model, still they provide a large class of practically useful schemes 
and allow one to ensure that the numerical kinetic function {\sl approaches} the exact one.

It is now established numerically that kinetic functions exist and are monotone for 
a large class of physically relevant models including thin liquid films,  generalized Camassa-Holm, nonlinear phase transitions, van der Waals fluids (for small shocks), and magnetohydrodynamics.
 
Computing the kinetic function has been found to be very useful to investigate
the effects of the diffusion/dispersion ratio, regularization, order of accuracy of the schemes, 
 the efficiency of the schemes, 
 as well as to make comparisons between several physical models. 

In addition, it should be noted that kinetic functions may be associated with schemes \cite{HayesLeFloch-schemes}: 
The Beam-Warming scheme (for concave-convex flux) produces non-classical shocks, while
no such shocks are observed with the Lax-Wendroff scheme. 
All of this depends crucially on the sign of the numerical dispersion coefficient.

 As mentioned earlier, similar issues arise in dealing with the numerical 
 approximation of nonlinear hyperbolic systems in nonconservative form
 for which we refer to \cite{HouLeFloch,Castro--Pares} and the references therein.  
 

\section{Concluding remarks}

Let us conclude by mentioning a few more issues of interest. 

Vanishing diffusion-dispersion limits in the context of the initial value problem 
have been investigated by several approaches. 
Tartar's compensated compactness method was applied to treat one-dimensional 
scalar conservation laws, by Schonbek \cite{Schonbek}, Hayes and LeFloch \cite{HayesLeFloch-scalar}, and 
LeFloch and Natalini \cite{LeFlochNatalini}. The $2 \times 2$ system of nonlinear 
elasticity system was tackled in \cite{HayesLeFloch-systems}. 
Singular limits for the Camassa-Holm equation were analyzed by Coclite and Karlsen \cite{CocliteKarlsen}. 
In all these works, conditions are imposed on the diffusion and dispersion parameters which are 
mild enough to allow for nonclassical shocks in the limit.

Another strategy based on DiPerna's measure-valued solutions \cite{DiPerna} 
 applies to multidimensional conservation laws. Strong convergence results were established
by Correia and LeFloch \cite{CorreiaLeFloch1,CorreiaLeFloch2} and Kondo and LeFloch \cite{KondoLeFloch}. 
The generalization to discontinuous flux is provided in Holden, Karlsen, and Mitrovic \cite{HKM}. 
DiPerna's theorem requires all of the entropy inequalities, and therefore these results do not cover the regime
of parameters allowing for nonclassical shocks.

A third approach is based on Lions, Perthame, and Tadmor's kinetic formulation \cite{LPT}
and again applies to multidimensional conservation laws. 
It was first observed by Hwang and Tzavaras \cite{HwangTzavaras} 
that the kinetic formulation extends to singular limits to conservation laws when 
diffusive and dispersive parameters are kept in balance. 
See also Hwang \cite{Hwang1,Hwang2} and Kwon \cite{Kwon}. 
The kinetic formulation was recently extended to non-local regularizations \cite{KLR}.

In another direction, LeFloch and Shearer \cite{LeFlochShearer} introduced a {\sl nucleation criterion}
 and 
a {\sl Riemann solver with kinetic and nucleation} for scalar conservation laws. 
They could cope with problems in which the traveling wave analysis does not select a 
unique solution to the Riemann problem when, even after imposing a kinetic relation
for undercompressive shocks, 
 one is still left with a classical and a nonclassical Riemann solution. Earlier on,
Abeyaratne and Knowles \cite{AbeyaratneKnowles1}
had introduced a nucleation criterion in the context of the model of elastodynamics
with a trilinear equation of state. 

Roughly speaking, the nucleation criterion 
imposes that a ``sufficiently large'' initial jump always ``nucleates''. 
The qualitative properties of the Riemann solver with kinetic and nucleation
were investigated in \cite{LeFlochShearer}:
 prescribing the set of admissible waves does not uniquely determine the Riemann solution, 
 and
 instability phenomena such as ``splitting-merging'' wave structures take place.
The analysis was recently extended to hyperbolic systems of 
conservation laws \cite{LaforestLeFloch2} and 
was also further investigated numerically \cite{LevyShearer1}. 
It is expected that the nucleation criterion will be particulary relevant for 
higher-order regularizations such as the one in the thin liquid film model.  

In conclusion, a large class of scalar equations and $2 \times 2$ systems are now well-understood. 
The Riemann problem is uniquely determined by an entropy inequality and a kinetic relation, 
and the kinetic function can be determined by an analysis of traveling solutions. 
For these example, the Riemann solution depends continuously upon its initial data. 
However, for other models such as the thin film model or the hyperbolic-elliptic model of phase transitions, 
the existing theory has limitations, and this suggests challenging open problems.

Due to a lack of space (and time), still many other issues could not be treated in this review. 
Kinetic relations are relevant also for nonconservative hyperbolic systems \cite{BerthonCoquelLeFloch}, 
and should play an important role in the mathematical modeling of multi-fluid and turbulence models.  
For an extensive literature on the nonlinear stability of undercompressive  shock waves 
(including for multi-dimensional problems), 
we refer to works by T.-P. Liu, Metivier, Williams, Zumbrun, and others. 
See \cite{Azevedo,HowardZumbrun,Isaacson,LiuZumbrun1,LiuZumbrun2,RaoofiZumbrun,SchecterShearer} and the references therein.


\section*{Acknowledgments}  

I am particularly grateful to Constantine M. Dafermos, Tai-Ping Liu, and Michael Shearer
for scientific discussions on this research work
and for their encouragments to pursue it. 

This paper was written as part of  the international research program on Nonlinear Partial Differential Equations 
held at the Centre for Advanced Study of the Norwegian Academy of Science and Letters
during the Academic Year 2008Ð-09. I am very grateful to Helge Holden and 
Kenneth Karlsen for their invitation and hospitality and for the opportunity to 
give a short-course, originally entitled ``Small-scale dependent shock waves. Theory, approximation, and applications''. The author was also supported by a DFG-CNRS collaborative grant between France and Germany
on ``Micro-Macro Modeling and Simulation of Liquid-Vapor Flows'', 
as well as by 
 the Centre National de la Recherche Scientifique (CNRS) and 
the Agence Nationale de la Recherche (ANR) via the grant 06-2-134423. 


\end{document}